\def\ps@pprintTitle{
 \let\@oddhead\@empty
 \let\@evenhead\@empty
 \def\@oddfoot{\centerline{\thepage}}
 \let\@evenfoot\@oddfoot}
\theoremstyle{plain}
\newtheorem{theorem}{Theorem}
\newtheorem{lemma}{Lemma}
\newtheorem*{notation}{Notation}
\newcommand{\N}{\mathbb{N}}
\newcommand{\R}{\mathbb{R}}
\newcommand{\QQ}{\mathbb{Q}}
\newcommand{\PP}{\mathbb{P}}
\newcommand{\EE}{\mathbb{E}}
\newcommand{\VV}{\mathbb{V}\mathrm{ar}}
\newcommand{\nvert}[0]{\, \vert \, }
\newcommand{\bb}[1]{\boldsymbol{#1}}
\newcommand{\mat}[1]{\mathbb{#1}}
\newcommand{\OO}{\mathcal O}
\newcommand{\oo}{\mathrm{o}}
\newcommand{\leqdef}{\vcentcolon=}
\newcommand{\rd}{{\rm d}}
\newcommand{\ind}{\mathds{1}}
\begin{document}

\begin{frontmatter}

    \title{Local normal approximations and probability metric bounds for the matrix-variate \texorpdfstring{$T$}{T} distribution and its application to Hotelling's \texorpdfstring{$T$}{T} statistic}%

    \author[a1,a2]{Fr\'ed\'eric Ouimet\texorpdfstring{\fnref{fn1}}{)}}%

    \address[a1]{McGill University, Montreal, QC H3A 0B9, Canada.}%
    \address[a2]{California Institute of Technology, Pasadena, CA 91125, USA.}%

    \ead{frederic.ouimet2@mcgill.ca}%

    \fntext[fn1]{F.\ O.\ is supported by a postdoctoral fellowship from the FRQNT (B3XR).}%

    \begin{abstract}
        In this paper, we develop local expansions for the ratio of the centered matrix-variate $T$ density to the centered matrix-variate normal density with the same covariances. The approximations are used to derive upper bounds on several probability metrics (such as the total variation and Hellinger distance) between the corresponding induced measures.
        This work extends some of the results of \cite{doi:10.1109/LCOMM.2015.2442576} and \cite{Ouimet_2022_Student_JCA} for the univariate Student distribution to the matrix-variate setting.
    \end{abstract}

    \begin{keyword} 
        asymptotic statistics, expansion, Hotelling's $T$-squared statistic, Hotelling's $T$ statistic, matrix-variate normal distribution, local approximation, matrix-variate $T$ distribution, normal approximation, Student distribution, $T$ distribution, total variation
        \MSC[2020]{Primary: 62E20 Secondary: 60F99}
    \end{keyword}

\end{frontmatter}

\section{Introduction}\label{sec:intro}

    For any $n\in \N$, define the space of (real symmetric) positive definite matrices of size $n\times n$ as follows:
    \begin{equation}\label{eq:def:positive.definite.matrices}
        \mathcal{S}_{++}^{\hspace{0.3mm}n} \leqdef \left\{\mat{M}\in \R^{n\times n} : \text{$\mat{M}$ is symmetric and positive definite}\right\}.
    \end{equation}
    For $d, m\in \N$, $\nu > 0$, $M\in \R^{d \times m}$, $\Sigma\in \mathcal{S}_{++}^{\hspace{0.3mm}d}$ and $\Omega\in \mathcal{S}_{++}^{\hspace{0.3mm}m}$, the density function of the centered (and normalized) matrix-variate $T$ distribution, hereafter denoted by $T_{d,m}(\nu,\Sigma,\Omega)$, is defined, for all $X\in \R^{d\times m}$, by
    \begin{equation}\label{eq:matrix.T.density}
        \begin{aligned}
            K_{\nu,\Sigma,\Omega}(X)
            &\leqdef \frac{\Gamma_d(\frac{1}{2}(\nu + m + d - 1))}{\Gamma_d(\frac{1}{2}(\nu + d - 1))} \frac{|\mathrm{I}_d + \nu^{-1} \Sigma^{-1} X \Omega^{-1} X^{\top}|^{-(\nu + m + d - 1)/2}}{(\nu\pi)^{md/2} |\Sigma|^{m/2} |\Omega|^{d/2}},
        \end{aligned}
    \end{equation}
    (see, e.g., (Definition~4.2.1 in \cite{Gupta_Nagar_1999}))
    where $\nu$ is the number of degrees of freedom, and
    \begin{equation}
        \begin{aligned}
            \Gamma_d(z)
            &= \int_{\mat{S}\in \mathcal{S}_{++}^{\hspace{0.3mm}d}} |\mat{S}|^{z-(d+1)/2} \exp(-\mathrm{tr}(\mat{S})) \rd \mat{S} \\
            &= \pi^{d(d-1)/4} \prod_{j=1}^d \Gamma\biggl(z-\frac{j-1}{2}\biggr), \quad \Re(z)>\frac{d-1}{2},
        \end{aligned}
    \end{equation}
    denotes the multivariate gamma function---see, e.g., (Section~35.3 in \cite{MR2723248}) and~\cite{MR3189307}---and
    \begin{equation}
        \Gamma(z) = \int_0^{\infty} t^{z - 1} e^{-t} \rd t, \quad \Re(z) > 0,
    \end{equation}
    is the classical gamma function.
    The mean and covariance matrix for the vectorization of $T\sim T_{d,m}(\nu,\Sigma,\Omega)$, namely
    \begin{equation}\label{eq:vectorization}
        \mathrm{vec}(T) \leqdef (T_{11}, T_{21}, \dots, T_{d1}, T_{12}, T_{22}, \dots, T_{d2}, \dots, T_{1m}, T_{2m}, \dots, T_{dm})^{\top},
    \end{equation}
    ($\mathrm{vec}(\cdot)$ is the operator that stacks the columns of a matrix on top of each other) are known to be (see, e.g., Theorem 4.3.1 in \cite{Gupta_Nagar_1999}, but be careful of the normalization):
    \begin{equation}
        \EE[\mathrm{vec}(T)] = \bb{0}_{dm} ~~(\text{i.e., } \EE[T] = 0_{d\times m}),
    \end{equation}
    and
    \begin{equation}\label{eq:variance.explicit.estimate}
        \VV(\mathrm{vec}(T^{\top})) = \frac{\nu}{(\nu - 2)} \Sigma \otimes \Omega, \quad \nu > 2.
    \end{equation}

    The first goal of our paper (Theorem~\ref{thm:LLT.matrix.T}) is to establish an asymptotic expansion for the ratio of the centered matrix-variate $T$ density \eqref{eq:matrix.T.density} to the centered matrix-variate normal (MN) density with the same covariances.
    According to (\citet{Gupta_Nagar_1999}, Theorem~2.2.1), the density of the $\mathrm{MN}_{d,m}(0_{d\times m}, \Sigma \otimes \Omega)$ distribution is
    \begin{equation}\label{eq:sym.matrix.normal.density}
        g_{\hspace{0.3mm}\Sigma,\Omega}(X) = \frac{\exp\left(-\frac{1}{2}\mathrm{tr}\left(\Sigma^{-1} X \Omega^{-1} X^{\top}\right)\right)}{(2\pi)^{md/2} |\Sigma|^{m/2} |\Omega|^{d/2}}, \quad X\in \R^{d\times m}.
    \end{equation}

    The second goal of our paper (Theorem~\ref{thm:probability.metric.bounds}) is to apply the log-ratio expansion from Theorem~\ref{thm:LLT.matrix.T} to derive upper bounds on multiple probability metrics between the measures induced by the centered matrix-variate $T$ distribution and the corresponding centered matrix-variate normal distribution. In the special case $m = 1$, this gives us probability metric upper bounds between the measure induced by Hotelling's $T$ statistic and the associated matrix-normal~measure.

    To give some practical motivations for the MN distribution \eqref{eq:sym.matrix.normal.density}, note that noise in the estimate of individual voxels of diffusion tensor magnetic resonance imaging (DT-MRI) data has been shown to be well modeled by a symmetric form of the $\mathrm{MN}_{3\times 3}$ distribution in~\cite{Pajevic_Basser_1999,doi:10.1002nbm.783,doi:10.1016/s1090-7807(02)00178-7}. The symmetric MN voxel distributions were combined into a tensor-variate normal distribution in \cite{doi:10.1109/TMI.2003.815059,doi:10.1137/16M1098693}, which could help to predict how the whole image (not just individual voxels) changes when shearing and dilation operations are applied in image wearing and registration problems; see \citet{doi:10.1109/42.963816}. In \cite{MR2485016}, maximum likelihood estimators and likelihood ratio tests are developed for the eigenvalues and eigenvectors of a form of the symmetric MN distribution with an orthogonally invariant covariance structure, both in one-sample problems (for example, in image interpolation) and two-sample problems (when comparing images) and under a broad variety of assumptions.
    This work extended significantly the previous results of \citet{MR131312}.
    In \cite{MR2485016}, it is also mentioned that the polarization pattern of cosmic microwave background (CMB) radiation measurements can be represented by $2\times 2$ positive definite matrices; see the primer by~\citet{doi:10.1016/S1384-1076(97)00022-5}. In a very recent and interesting paper, \citet{doi:10.1093/mnras/stab368} presented evidence for the Gaussianity of the local extrema of CMB maps.
    We can also mention \cite{doi:10.1016/j.patcog.2018.02.025}, where finite mixtures of skewed MN distributions were applied to an image recognition~problem.

    In general, we know that the Gaussian distribution is an attractor for sums of i.i.d.\ random variables with finite variance, which makes many estimators in statistics asymptotically normal. Similarly, we expect the MN distribution \eqref{eq:sym.matrix.normal.density} to be an attractor for sums of i.i.d.\ random matrices with finite variances (Hotelling's $T$-squared statistic is the most natural example), thus including many estimators, such as sample covariance matrices and score statistics for matrix parameters.
    In particular, if a given statistic or estimator is a function of the components of a sample covariance matrix for i.i.d.\ observations coming from a multivariate Gaussian population, then we could study its large sample properties (such as its moments) using Theorem~\ref{thm:LLT.matrix.T} (for example, by turning a Student-moments estimation problem into a Gaussian-moments estimation problem).

    The following is a brief outline of the paper.
    Our main results are stated in Section~\ref{sec:main.results} and proven in Section~\ref{sec:proofs}.
    Technical moment calculations are gathered in \ref{sec:technical.computations}.

    \begin{notation}
        {
        Throughout the paper, $a = \OO(b)$ means that $\limsup |a / b| < C$ as $\nu\to \infty$, where $C > 0$ is a universal constant.
        Whenever $C$ might depend on some parameter, we add a subscript (for example, $a = \OO_d(b)$).
        Similarly, $a = \oo(b)$ means that $\lim |a / b| =~0$, and subscripts indicate which parameters the convergence rate can depend on.
        If $a = (1 + \oo(1)) b$, then we write $a \sim b$.
        The notation $\mathrm{tr}(\cdot)$ will denote the trace operator for matrices and $|\cdot|$ their determinant.
        For a matrix $\mat{M}\in \R^{d\times d}$ that is diagonalizable, $\lambda_1(\mat{M}) \geq \dots \geq \lambda_d(\mat{M})$ will denote its eigenvalues, and we let $\bb{\lambda}(\mat{M}) \leqdef (\lambda_1(\mat{M}), \dots, \lambda_d(\mat{M}))^{\top}$.
        }
    \end{notation}

\section{Main Results}\label{sec:main.results}

    In Theorem~\ref{thm:LLT.matrix.T} below, we prove an asymptotic expansion for the ratio of the centered matrix-variate $T$ density to the centered matrix-variate normal (MN) density with the same covariances. The case $d = m = 1$ was proven recently in \cite{Ouimet_2022_Student_JCA} (see also \cite{doi:10.1109/LCOMM.2015.2442576} for an earlier rougher version).
    The result extends significantly the convergence in distribution result from Theorem~4.3.4 in \cite{Gupta_Nagar_1999}.

    \begin{theorem}\label{thm:LLT.matrix.T}
        Let $d,m\in \N$, $\Sigma\in \mathcal{S}_{++}^{\hspace{0.3mm}d}$ and $\Omega\in \mathcal{S}_{++}^{\hspace{0.3mm}m}$ be given.
        Pick any $\eta\in (0,1)$ and let
        \begin{equation}\label{eq:thm:p.k.expansion.condition}
            B_{\nu,\Sigma,\Omega}(\eta) \leqdef \left\{X\in \R^{d\times m} : \max_{1 \leq j \leq d} \frac{\delta_{\lambda_j}}{\sqrt{\nu - 2}} \leq \eta \, \nu^{-1/4}\right\}
        \end{equation}
        denote the bulk of the centered matrix-variate $T$ distribution, where
        \begin{equation}
            \Delta_X \leqdef \Sigma^{-1/2} X \Omega^{-1/2} \quad \text{and} \quad \delta_{\lambda_j} \leqdef \sqrt{\frac{\nu - 2}{\nu} \, \lambda_j(\Delta_X\Delta_X^{\top})}, \quad 1 \leq j \leq d.
        \end{equation}
        Then, as $\nu\to \infty$ and uniformly for $X\in B_{\nu,\Sigma,\Omega}(\eta)$, we have
        \begin{equation}\label{eq:LLT.order.2.log}
            \begin{aligned}
                &\log \left(\frac{[\nu / (\nu - 2)]^{md/2} \, K_{\nu,\Sigma,\Omega}(X)}{g_{\hspace{0.3mm}\Sigma,\Omega}(X / \sqrt{\nu / (\nu - 2)})}\right) \\
                &\hspace{20mm}= \nu^{-1} \left\{\hspace{-1mm}
                    \begin{array}{l}
                        \frac{1}{4} \mathrm{tr}\left((\Delta_X \Delta_X^{\top})^2\right) - \frac{(m + d + 1)}{2} \mathrm{tr}\left(\Delta_X \Delta_X^{\top}\right) \\
                        + \frac{m d (m + d + 1)}{4}
                    \end{array}
                    \hspace{-1mm}\right\} \\
                &\hspace{20mm}+ \nu^{-2} \left\{\hspace{-1mm}
                    \begin{array}{l}
                        -\frac{1}{6} \mathrm{tr}\left((\Delta_X \Delta_X^{\top})^3\right) + \frac{(m + d - 1)}{4} \mathrm{tr}\left((\Delta_X \Delta_X^{\top})^2\right) \\[1mm]
                        + \frac{m d}{24} (13 - 2 d^2 - 3 d (-3 + m) + 9 m - 2 m^2)
                    \end{array}
                    \hspace{-1mm}\right\} \\
                &\hspace{20mm}+ \nu^{-3} \left\{\hspace{-1mm}
                    \begin{array}{l}
                        \frac{1}{8} \mathrm{tr}\left((\Delta_X \Delta_X^{\top})^4\right) - \frac{(m + d - 1)}{6} \mathrm{tr}\left((\Delta_X \Delta_X^{\top})^3\right) \\[1mm]
                        + \frac{m d}{24} \left(\hspace{-1mm}
                            \begin{array}{l}
                                26 + d^3 + 2 d^2 (-3 + m) + 11 m \\
                                - 6 m^2 + m^3 + d (11 - 9 m + 2 m^2)
                            \end{array}
                            \hspace{-1mm}\right)
                    \end{array}
                    \hspace{-1mm}\right\} \\
                &\hspace{20mm}+ \OO_{d,m,\eta}\left(\frac{1 + \mathrm{tr}\left((\Delta_X \Delta_X^{\top})^5\right)}{\nu^4}\right).
            \end{aligned}
        \end{equation}
    \end{theorem}

    {Local approximations }such as the one in Theorem~\ref{thm:LLT.matrix.T} can be found for the Poisson, binomial and negative binomial distributions in \cite{MR207011} (based on Fourier analysis results from \cite{MR14626}), and \cite{MR538319} for the binomial distribution.
    Another approach, using Stein's method, is used to study the variance-gamma distribution in \cite{MR3194737}. Moreover, Kolmogorov and Wasserstein distance bounds are derived in \cite{MR4291370,MR4064309} for the Laplace and variance-gamma distributions.

    Below, we provide numerical evidence (displayed graphically) for the validity of the expansion in Theorem~\ref{thm:LLT.matrix.T} when $d = m = 2$.
    We compare three levels of approximation for various choices of $\mat{S}$.
    For any given $\mat{S}\in \mathcal{S}_{++}^{\hspace{0.3mm}d}$, define
    \begin{align}
        E_0
        &\leqdef \sup_{X\in B_{\nu,\Sigma,\Omega}(\nu^{-1/4})} \left|\log \left(\frac{[\nu / (\nu - 2)]^{md/2} \, K_{\nu,\Sigma,\Omega}(X)}{g_{\hspace{0.3mm}\Sigma,\Omega}(X / \sqrt{\nu / (\nu - 2)})}\right)\right|, \label{eq:E.0} \\[2.5mm]
        E_1
        &\leqdef \sup_{X\in B_{\nu,\Sigma,\Omega}(\nu^{-1/4})} \left|\log \left(\frac{[\nu / (\nu - 2)]^{md/2} \, K_{\nu,\Sigma,\Omega}(X)}{g_{\hspace{0.3mm}\Sigma,\Omega}(X / \sqrt{\nu / (\nu - 2)})}\right)\right. \notag \\
        &\hspace{25mm}\left.- \nu^{-1} \left\{\frac{1}{4} \mathrm{tr}\left((\Delta_X \Delta_X^{\top})^2\right) - \frac{(m + d + 1)}{2} \mathrm{tr}\left(\Delta_X \Delta_X^{\top}\right) + \frac{m d (m + d + 1)}{4}\right\}\right|, \label{eq:E.1} \\[1.5mm]
        E_2
        &\leqdef \sup_{X\in B_{\nu,\Sigma,\Omega}(\nu^{-1/4})} \left|\log \left(\frac{[\nu / (\nu - 2)]^{md/2} \, K_{\nu,\Sigma,\Omega}(X)}{g_{\hspace{0.3mm}\Sigma,\Omega}(X / \sqrt{\nu / (\nu - 2)})}\right)\right. \notag \\
        &\hspace{25mm}\left.- \nu^{-1} \left\{\frac{1}{4} \mathrm{tr}\left((\Delta_X \Delta_X^{\top})^2\right) - \frac{(m + d + 1)}{2} \mathrm{tr}\left(\Delta_X \Delta_X^{\top}\right) + \frac{m d (m + d + 1)}{4}\right\}\right. \notag \\
        &\hspace{25mm}\left.- \nu^{-2} \left\{\hspace{-1mm}
        \begin{array}{l}
            -\frac{1}{6} \mathrm{tr}\left((\Delta_X \Delta_X^{\top})^3\right) + \frac{(m + d - 1)}{4} \mathrm{tr}\left((\Delta_X \Delta_X^{\top})^2\right) \\[1mm]
            + \frac{m d}{24} (13 - 2 d^2 - 3 d (-3 + m) + 9 m - 2 m^2)
        \end{array}
        \hspace{-1mm}\right\}\right|. \label{eq:E.2}
    \end{align}
    In the \texttt{R} software \citep{Rsoftware}, we use Equation~\eqref{eq:LLT.beginning.next.1} to evaluate the log-ratios inside $E_0$, $E_1$ and $E_2$.

    Note that $X\in B_{\nu,\Sigma,\Omega}(\nu^{-1/4})$ implies $|\mathrm{tr}((\Delta_X \Delta_X^{\top})^k)| \leq d$ for all $k\in \N$, so we expect from Theorem~\ref{thm:LLT.matrix.T} that the maximum errors above ($E_0$, $E_1$ and $E_2$) will have the asymptotic behavior
    \begin{align}
        E_i = \OO_d(\nu^{-(1 + i)}), \quad \text{for all } i\in \{0,1,2\},
    \end{align}
    or, equivalently,
    \begin{align}\label{eq:liminf.exponent.bound}
        \liminf_{\nu\to \infty} \frac{\log E_i}{\log (\nu^{-1})} \geq 1 + i, \quad \text{for all } i\in \{0,1,2\}.
    \end{align}
    The property \eqref{eq:liminf.exponent.bound} is verified in Figure~\ref{fig:error.exponents.plots}
    below, for $\Omega = \mathrm{I}_2$ and various choices of $\Sigma_{2\times 2}$.
    Similarly, the corresponding log-log plots of the errors as a function of $\nu$ are displayed in Figure~\ref{fig:loglog.errors.plots}.
    The simulations are limited to the range $5 \leq \nu \leq 1005$.
    The \texttt{R} code that generated Figures~\ref{fig:error.exponents.plots}~and~\ref{fig:loglog.errors.plots} can be found   at Supplementary Material.

    As a consequence of the previous theorem, we can derive asymptotic upper bounds on several probability metrics between the probability measures induced by the centered matrix-variate $T$ distribution \eqref{eq:matrix.T.density} and the corresponding centered matrix-variate normal distribution \eqref{eq:sym.matrix.normal.density}. The distance between Hotelling's $T$ statistic \citep{doi:10.1214/aoms/1177732979} and the corresponding matrix-variate normal distribution is obtained in the special case $m = 1$.

    \begin{theorem}[Probability metric upper bounds]\label{thm:probability.metric.bounds}
        Let $d,m\in \N$, $\Sigma\in \mathcal{S}_{++}^{\hspace{0.3mm}d}$ and $\Omega\in \mathcal{S}_{++}^{\hspace{0.3mm}m}$ be given.
        Assume that $X\sim T_{d,m}(\nu,\Sigma,\Omega)$, $Y\sim \mathrm{MN}_{d,m}(0_{d\times m}, \Sigma \otimes \Omega)$, and let $\PP_{\nu,\Sigma,\Omega}$ and $\QQ_{\Sigma,\Omega}$ be the laws of $X$ and $Y \sqrt{\nu / (\nu - 2)}$, respectively.
        Then, as $\nu\to \infty$,
        \begin{equation}
            \mathrm{dist}\hspace{0.3mm}(\PP_{\nu,\Sigma,\Omega},\QQ_{\Sigma,\Omega}) \leq \frac{C \hspace{0.3mm} (m d)^{3/2}}{\nu} \qquad \text{and} \qquad \mathcal{H}(\PP_{\nu,\Sigma,\Omega},\QQ_{\Sigma,\Omega}) \leq \sqrt{\frac{2C \hspace{0.3mm} (m d)^{3/2}}{\nu}},
        \end{equation}
        where $C > 0$ is a universal constant, $\mathcal{H}(\cdot,\cdot)$ denotes the Hellinger distance, and $\mathrm{dist}\hspace{0.3mm}(\cdot,\cdot)$ can be replaced by any of the following probability metrics: total variation, Kolmogorov (or uniform) metric, L\'evy metric, discrepancy metric, Prokhorov metric.
    \end{theorem}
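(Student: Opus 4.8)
\section*{Proof proposal}

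The plan is to reduce all the listed distances to the total variation distance and then bound the latter using the pointwise log-ratio expansion of Theorem~\ref{thm:p.k.expansion}. For the reduction I would invoke the standard hierarchy of probability metrics: the Kolmogorov, L\'evy, discrepancy and Prokhorov distances are each dominated by a fixed multiple of the total variation distance, and the Hellinger distance obeys $\mathcal{H}\le\sqrt{2\,\mathrm{TV}}$. Hence it suffices to establish $\mathrm{TV}(\PP_{\nu,\Sigma,\Omega},\QQ_{\Sigma,\Omega}) \le C\,m^{3/2} d^{3/2}/\nu$, after which the stated Hellinger bound is immediate (the factor $\sqrt 2$ being exactly the constant in that inequality).

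Because $X\sim T_{d,m}(\nu,\Sigma,\Omega)$ has covariance $\tfrac{\nu}{\nu-2}\Sigma\otimes\Omega$ rather than $\Sigma\otimes\Omega$, I would introduce the intermediate normal law $W\sim\mathrm{MN}_{d,m}(0_{d\times m},\tfrac{\nu}{\nu-2}\Sigma\otimes\Omega)$, whose density is $g_W(X)=[\nu/(\nu-2)]^{-md/2} g_{\Sigma,\Omega}\bigl(X/\sqrt{\nu/(\nu-2)}\bigr)$, and split
\[
\mathrm{TV}(\PP_{\nu,\Sigma,\Omega},\QQ_{\Sigma,\Omega}) \le \mathrm{TV}(\PP_{\nu,\Sigma,\Omega},\mathrm{Law}(W)) + \mathrm{TV}(\mathrm{Law}(W),\QQ_{\Sigma,\Omega}).
\]
The second term compares two centered normals whose covariances differ only by the scalar factor $\tfrac{\nu}{\nu-2}=1+O(\nu^{-1})$; computing the Gaussian Kullback--Leibler divergence gives $O(md/\nu^2)$, so Pinsker's inequality bounds it by $O(\sqrt{md}/\nu)$, well within the target. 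For the first term the choice of $W$ makes the density ratio coincide with the quantity expanded in Theorem~\ref{thm:p.k.expansion}: writing $u(X)$ for the left-hand side of \eqref{eq:LLT.order.2.log}, one has $K_{\nu,\Sigma,\Omega}(X)/g_W(X)=e^{u(X)}$, so that $\mathrm{TV}(\PP_{\nu,\Sigma,\Omega},\mathrm{Law}(W))=\tfrac12\int_{\R^{d\times m}} g_W(X)\,\bigl|e^{u(X)}-1\bigr|\,\rd X$.

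I would then split this integral at the boundary of the bulk $B_{\nu,\Sigma,\Omega}(\eta)$ for a fixed $\eta\in(0,1)$, which in terms of $S_X \leqdef \Delta_X\Delta_X^{\top}$ reads $\lambda_d(S_X)\le\eta^2\sqrt{\nu}$. On the complement both densities decay faster than any power of $\nu$ -- the normal tail is Gaussian, while the matrix-$T$ density contains the factor $|\mathrm{I}_d+\nu^{-1}S_X|^{-(\nu+m+d-1)/2}$, which at the eigenvalue scale $\sqrt\nu$ is $\lesssim e^{-c\sqrt\nu}$ -- so the tail contributes $\oo(\nu^{-1})$. On the bulk I would use $|e^{u}-1|\le|u|\,e^{|u|}$ together with the expansion, whose leading term is $\nu^{-1}P(\Delta_X)$ with
\[
P(\Delta_X) = \tfrac14\mathrm{tr}(S_X^2) - \tfrac{m+d+1}{2}\mathrm{tr}(S_X) + \tfrac{md(m+d+1)}{4},
\]
the subsequent terms carrying additional powers of $\nu^{-1}$. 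Since $\mathrm{tr}(S_X^2)\le\lambda_d(S_X)\,\mathrm{tr}(S_X)\le\eta^2\sqrt{\nu}\,\mathrm{tr}(S_X)$ on the bulk, the factor $e^{|u|}$ is controlled by a mild exponential tilt $e^{c\eta^2\nu^{-1/2}\mathrm{tr}(S_X)}$ that is absorbed into $g_W$ for large $\nu$; this reduces the bulk integral, up to a constant, to $\nu^{-1}\,\EE_W[\,|P(\Delta_X)|\,]$ plus contributions of lower order in $\nu$ after integration.

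Finally, by Cauchy--Schwarz $\EE_W[\,|P(\Delta_X)|\,]\le\bigl(\EE_W[P(\Delta_X)^2]\bigr)^{1/2}$, and everything reduces to the moment identities gathered in Section~\ref{sec:technical.computations}. Under $W$ the matrix $\Delta_X=\Sigma^{-1/2}X\Omega^{-1/2}$ has independent $N(0,\nu/(\nu-2))$ entries, so $S_X$ is a scaled Wishart matrix with $\EE_W[\mathrm{tr}(S_X)]=\tfrac{\nu}{\nu-2}md$ and $\EE_W[\mathrm{tr}(S_X^2)]=(\tfrac{\nu}{\nu-2})^2 md(m+d+1)$; the three coefficients of $P$ are exactly those for which these combine into the perfect square $\EE_W[P(\Delta_X)]=\tfrac{md(m+d+1)}{4}\bigl(\tfrac{2}{\nu-2}\bigr)^2=O(md(m+d)/\nu^2)$. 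Consequently $\EE_W[P(\Delta_X)^2]=\mathrm{Var}_W(P(\Delta_X))+\oo(1)$, and a Wick-type computation of the degree-eight Gaussian moments involved shows $\mathrm{Var}_W(P(\Delta_X))\le C\,(md)^3$, whence $\EE_W[\,|P(\Delta_X)|\,]\le C\,(md)^{3/2}$ and $\mathrm{TV}(\PP_{\nu,\Sigma,\Omega},\mathrm{Law}(W))\le C\,m^{3/2}d^{3/2}/\nu$. The main obstacle is precisely this last computation with the correct dimension dependence: one must exploit the near-cancellation that keeps $\EE_W[P]$ of lower order (a mean of size $md(m+d)$ would already exceed $(md)^{3/2}$ when $m$ and $d$ are very unequal) and then bound the eighth-order Gaussian moments entering $\mathrm{Var}_W(P)$ so that they grow no faster than $(md)^3$; absorbing $e^{|u|}$ uniformly on the bulk without degrading the dimension dependence is the accompanying technical point.
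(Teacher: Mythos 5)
Your proposal is correct in outline, but it takes a genuinely different route from the paper. The paper never decomposes through the intermediate Gaussian $W$ via the triangle inequality: it applies in one step the comparison inequality from page~726 of \citet{MR1922539},
\begin{equation}
    \|\PP_{\nu,\Sigma,\Omega} - \QQ_{\Sigma,\Omega}\| \leq \sqrt{2 \, \PP\left(X\in B_{\nu,\Sigma,\Omega}^{\hspace{0.3mm}c}(1/2)\right) + \EE\left[\log\Bigl(\tfrac{\rd \PP_{\nu,\Sigma,\Omega}}{\rd \QQ_{\Sigma,\Omega}}(X)\Bigr) \ind_{\{X\in B_{\nu,\Sigma,\Omega}(1/2)\}}\right]},
\end{equation}
so the smallness comes from the truncated Kullback--Leibler-type quantity being $\OO(m^3 d^{\hspace{0.2mm}3}\nu^{-2})$ \emph{under the $T$-law} $\PP_{\nu,\Sigma,\Omega}$, rather than from an $L^1$ estimate of $|e^{u}-1|$ under the Gaussian. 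Concretely, the paper plugs the expansion of Theorem~\ref{thm:p.k.expansion} into this expectation and evaluates the leading bracket with \emph{exact} matrix-$T$ trace moments obtained from the Wishart-mixture representation $\Delta_X\Delta_X^{\top} \nvert \mat{S} \sim \mathrm{Wishart}_{d\times d}(m,\nu\,\mat{S}^{-1})$ (Lemma~\ref{lem:Leblanc.2012.boundary.Lemma.1}), handling the indicator corrections by Cauchy--Schwarz/H\"older against $\PP(X\in A^c)$ (Lemma~\ref{lem:Leblanc.2012.boundary.Lemma.1.with.set.A}); the cancellation exploited there is the same algebraic identity as your perfect square $\EE_W[P]=\tfrac{md(m+d+1)}{4}(\tfrac{2}{\nu-2})^2$, only computed under $\PP$ instead of $W$. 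The tail probability is bounded by conditioning on $\mat{S}$ and using a large-deviation bound for the largest eigenvalue of a Wishart matrix, giving $\exp(-\nu^{1/2}/(10^4 md))$, in place of your pointwise density-decay sketch. What each approach buys: the paper's single inequality delivers the Hellinger bound simultaneously with total variation (both are controlled by the same square-root display, which is where the stated $\sqrt{2C\,m^{3/2}d^{\hspace{0.3mm}3/2}/\nu}$ comes from), and because only \emph{first} moments of traces enter, it entirely avoids your two delicate technical steps --- absorbing the tilt $e^{|u|}\leq e^{c\eta^2\nu^{-1/2}\mathrm{tr}(S_X)}$ into $g_W$ (where the normalizing constant changes by $e^{\OO(md\,\nu^{-1/2})}$, so uniformity in $(m,d)$ needs care) and the eighth-order Wick computation behind $\mathrm{Var}_W(P)\leq C\,(md)^3$. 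Your route is more elementary (Pinsker, Gaussian integrals, triangle inequality), yields the total variation bound directly rather than through a square root of a KL-type quantity, and your moment arithmetic is sound ($\mathrm{Var}_W(P)=\OO(md(m+d)^2)\leq 4(md)^3$, and the Gaussian--Gaussian piece is indeed $\OO(\sqrt{md}/\nu)$); but to be complete it would still require carrying out the tilting and Wick steps with explicit dimensional constants and a separate reduction $\mathcal{H}^2 \leq 2\,\mathrm{TV}$ for Hellinger, all of which the paper's one-shot inequality sidesteps.
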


    \begin{figure}[H]
        \captionsetup[subfigure]{labelformat=empty}
        \vspace{-0.5cm}
        \centering
        \begin{subfigure}[b]{0.22\textwidth}
            \centering
            \includegraphics[width=\textwidth, height=0.85\textwidth]{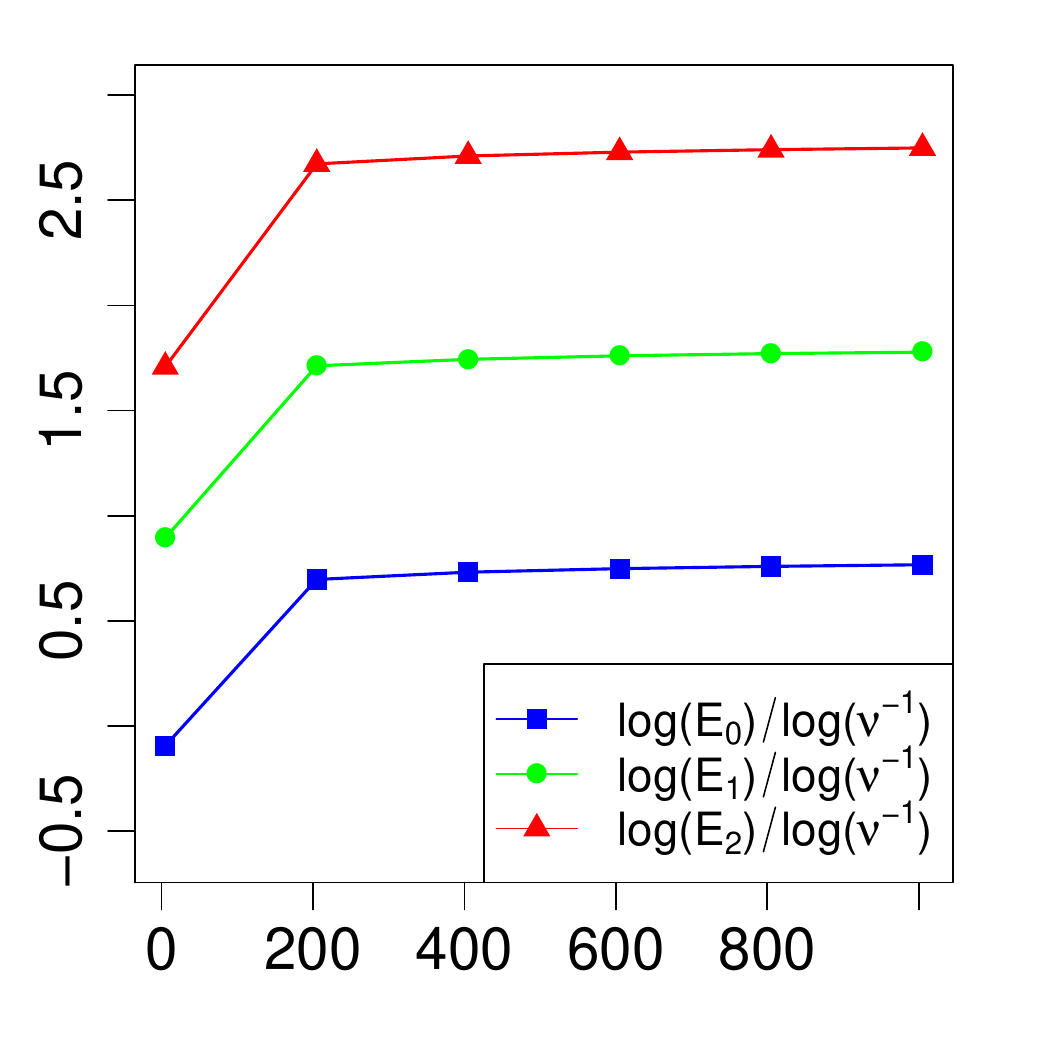}
            \vspace{-0.8cm}
            \caption{\scriptsize $\Sigma = \begin{pmatrix} 2 &\hspace{-2mm} 1 \\ 1 &\hspace{-2mm} 2\end{pmatrix}$, $\Omega = \begin{pmatrix} 1 &\hspace{-2mm} 0 \\ 0 &\hspace{-2mm} 1\end{pmatrix}$}
        \end{subfigure}
        ~~
        \begin{subfigure}[b]{0.22\textwidth}
            \centering
            \includegraphics[width=\textwidth, height=0.85\textwidth]{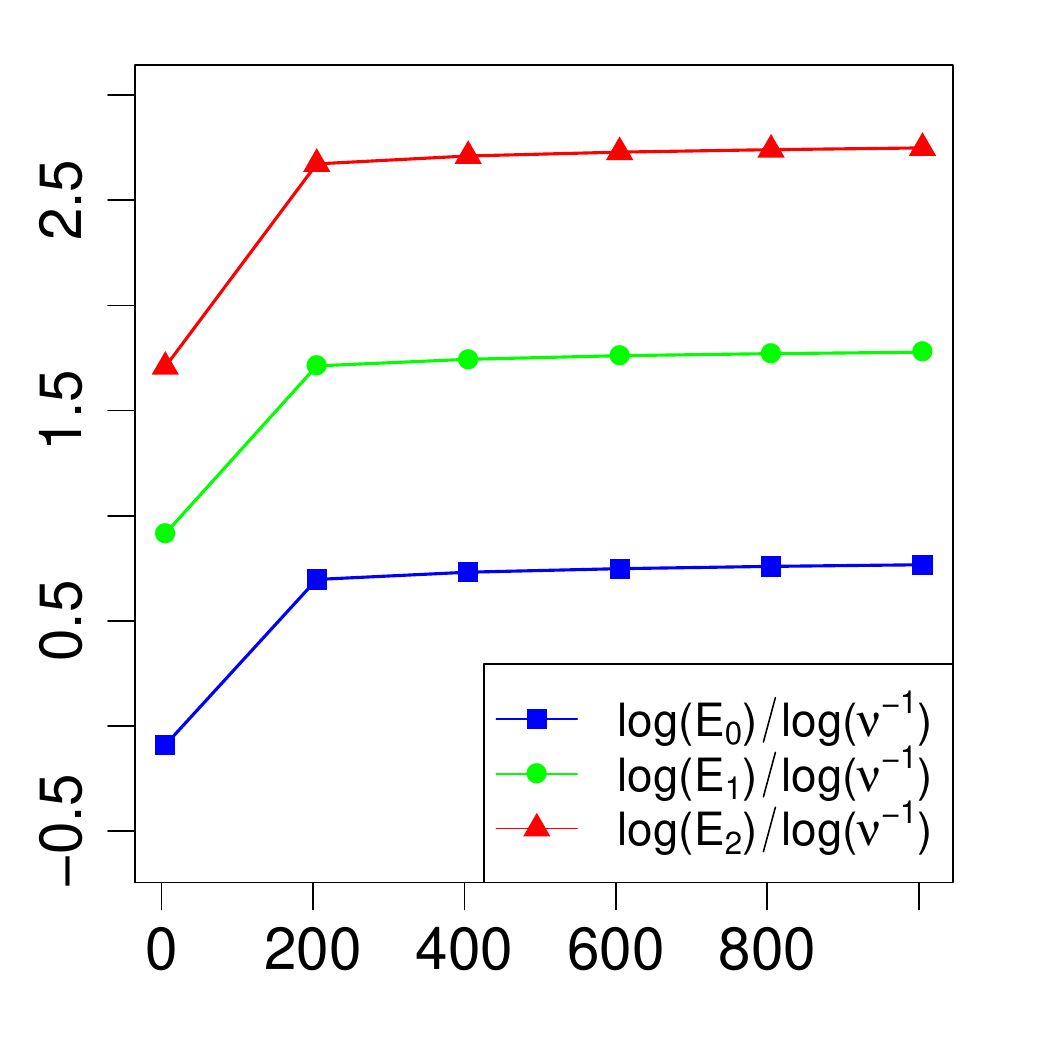}
            \vspace{-0.8cm}
            \caption{\scriptsize $\Sigma = \begin{pmatrix} 2 &\hspace{-2mm} 1 \\ 1 &\hspace{-2mm} 3\end{pmatrix}$, $\Omega = \begin{pmatrix} 1 &\hspace{-2mm} 0 \\ 0 &\hspace{-2mm} 1\end{pmatrix}$}
        \end{subfigure}
        ~~
        \begin{subfigure}[b]{0.22\textwidth}
            \centering
            \includegraphics[width=\textwidth, height=0.85\textwidth]{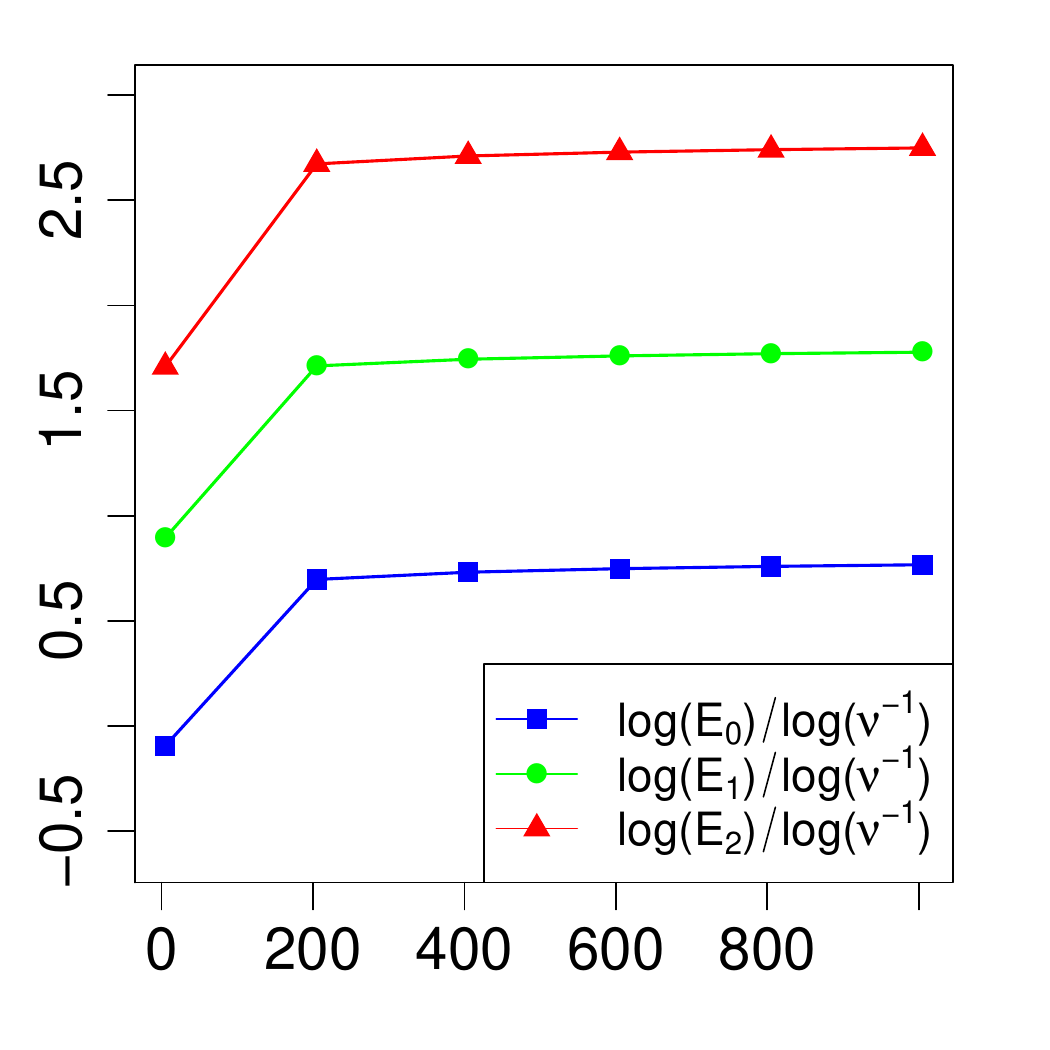}
            \vspace{-0.8cm}
            \caption{\scriptsize $\Sigma = \begin{pmatrix} 2 &\hspace{-2mm} 1 \\ 1 &\hspace{-2mm} 4\end{pmatrix}$, $\Omega = \begin{pmatrix} 1 &\hspace{-2mm} 0 \\ 0 &\hspace{-2mm} 1\end{pmatrix}$}
        \end{subfigure}
        ~~
        \begin{subfigure}[b]{0.22\textwidth}
            \centering
            \includegraphics[width=\textwidth, height=0.85\textwidth]{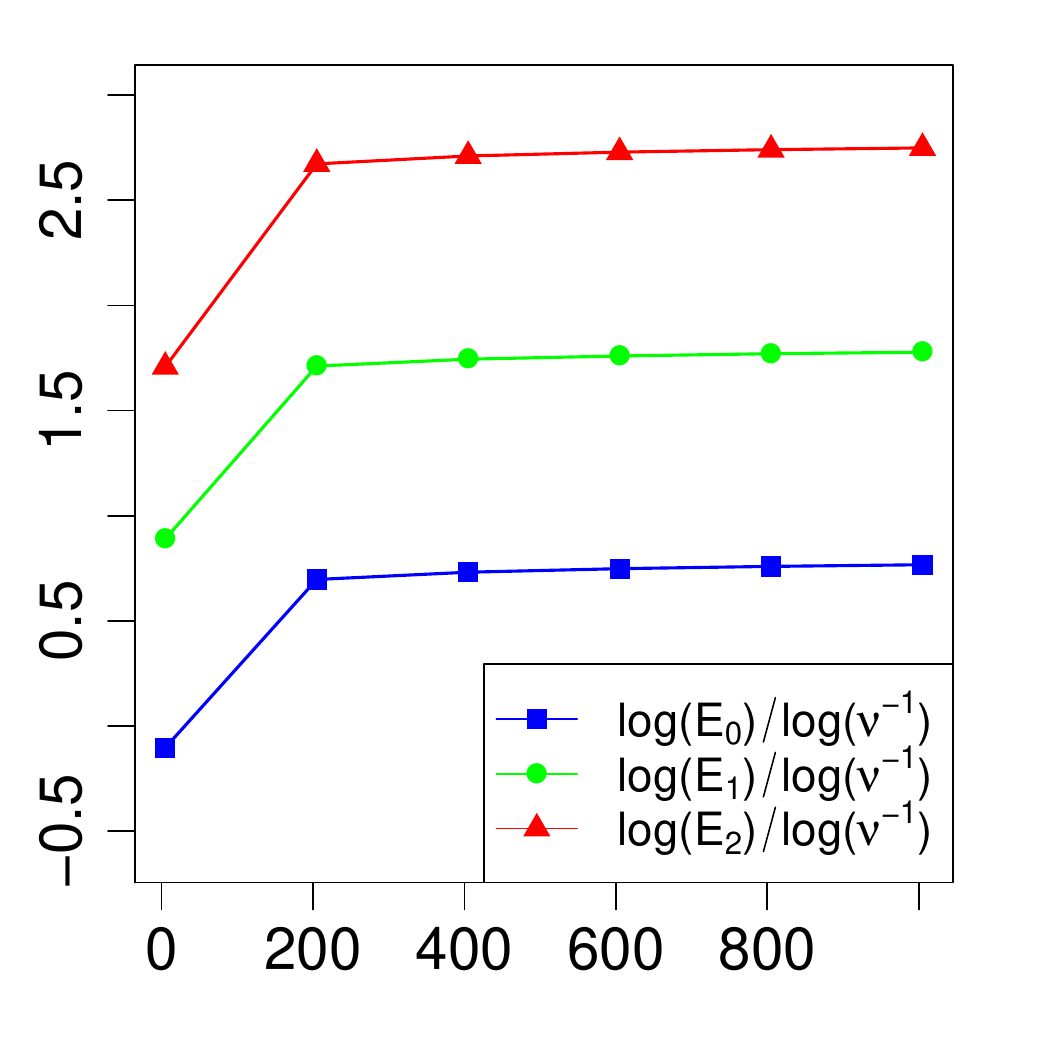}
            \vspace{-0.8cm}
            \caption{\scriptsize $\Sigma = \begin{pmatrix} 2 &\hspace{-2mm} 1 \\ 1 &\hspace{-2mm} 5\end{pmatrix}$, $\Omega = \begin{pmatrix} 1 &\hspace{-2mm} 0 \\ 0 &\hspace{-2mm} 1\end{pmatrix}$}
        \end{subfigure}
        \begin{subfigure}[b]{0.22\textwidth}
            \centering
            \includegraphics[width=\textwidth, height=0.85\textwidth]{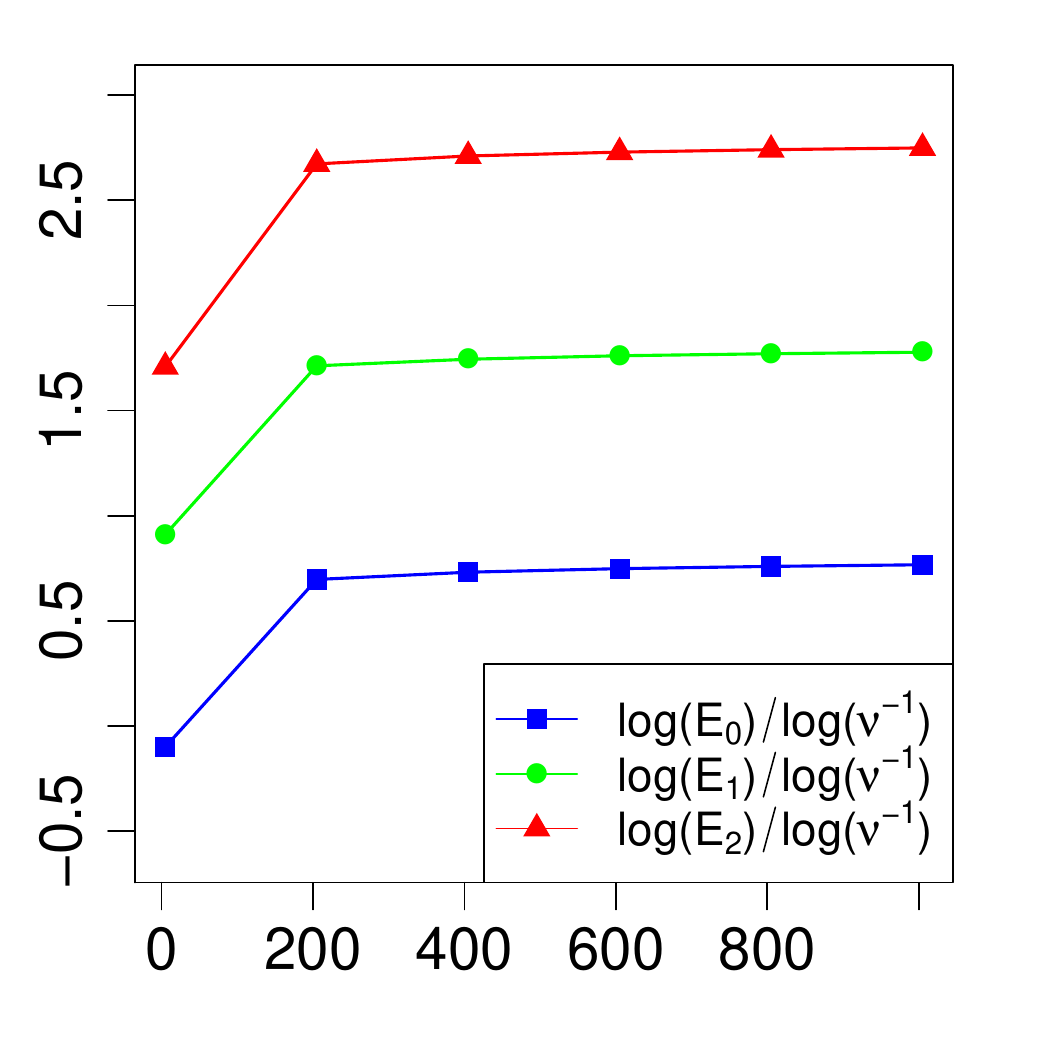}
            \vspace{-0.8cm}
            \caption{\scriptsize $\Sigma = \begin{pmatrix} 3 &\hspace{-2mm} 1 \\ 1 &\hspace{-2mm} 2\end{pmatrix}$, $\Omega = \begin{pmatrix} 1 &\hspace{-2mm} 0 \\ 0 &\hspace{-2mm} 1\end{pmatrix}$}
        \end{subfigure}
        ~~
        \begin{subfigure}[b]{0.22\textwidth}
            \centering
            \includegraphics[width=\textwidth, height=0.85\textwidth]{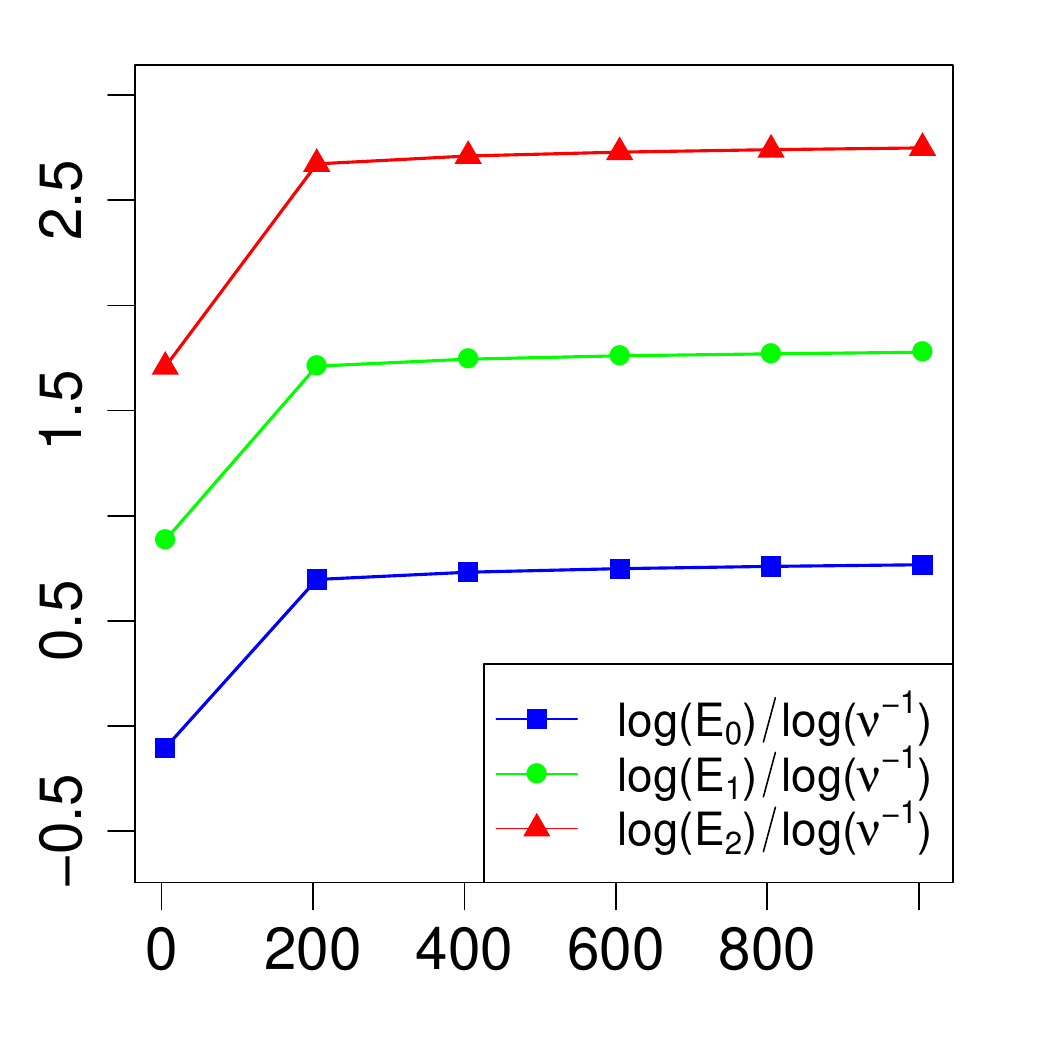}
            \vspace{-0.8cm}
            \caption{\scriptsize $\Sigma = \begin{pmatrix} 3 &\hspace{-2mm} 1 \\ 1 &\hspace{-2mm} 3\end{pmatrix}$, $\Omega = \begin{pmatrix} 1 &\hspace{-2mm} 0 \\ 0 &\hspace{-2mm} 1\end{pmatrix}$}
        \end{subfigure}
        ~~
        \begin{subfigure}[b]{0.22\textwidth}
            \centering
            \includegraphics[width=\textwidth, height=0.85\textwidth]{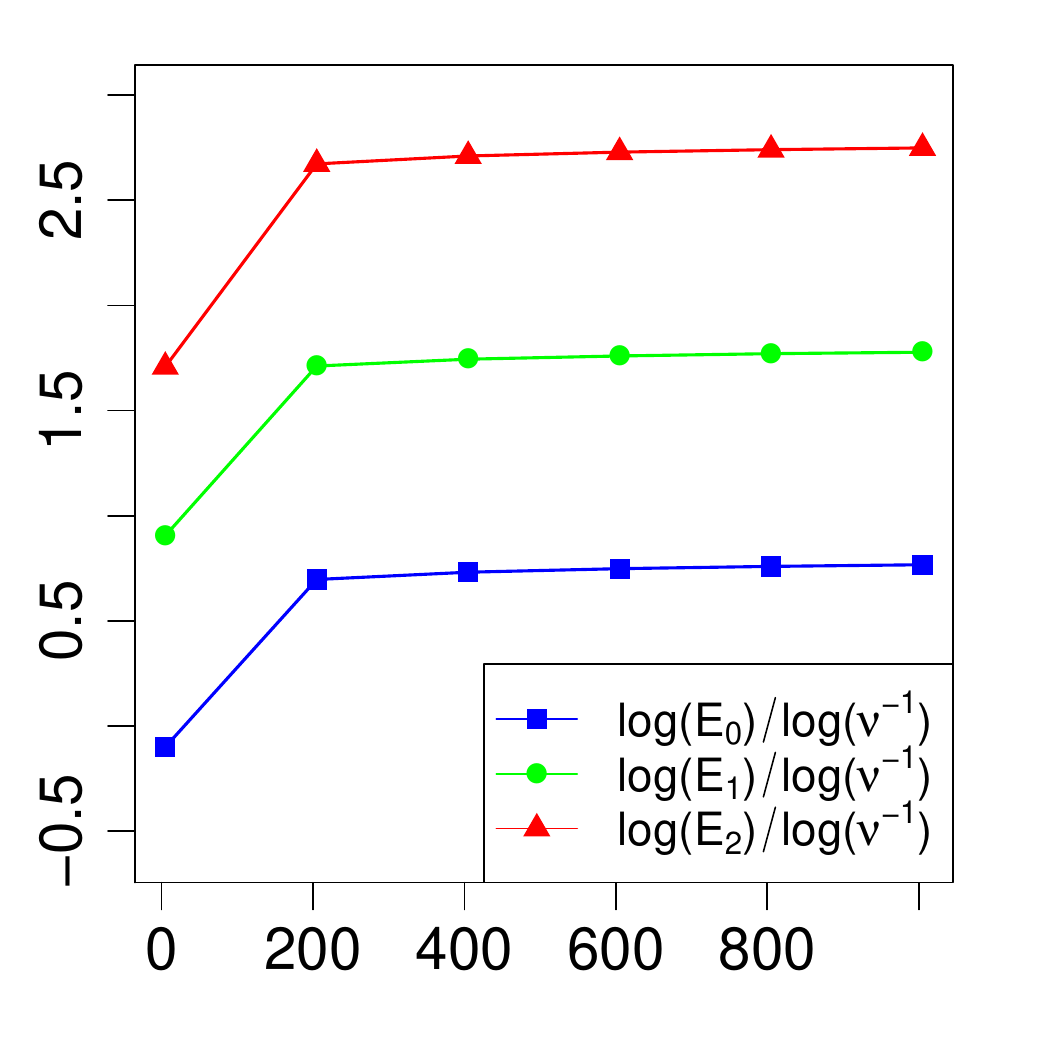}
            \vspace{-0.8cm}
            \caption{\scriptsize $\Sigma = \begin{pmatrix} 3 &\hspace{-2mm} 1 \\ 1 &\hspace{-2mm} 4\end{pmatrix}$, $\Omega = \begin{pmatrix} 1 &\hspace{-2mm} 0 \\ 0 &\hspace{-2mm} 1\end{pmatrix}$}
        \end{subfigure}
        ~~
        \begin{subfigure}[b]{0.22\textwidth}
            \centering
            \includegraphics[width=\textwidth, height=0.85\textwidth]{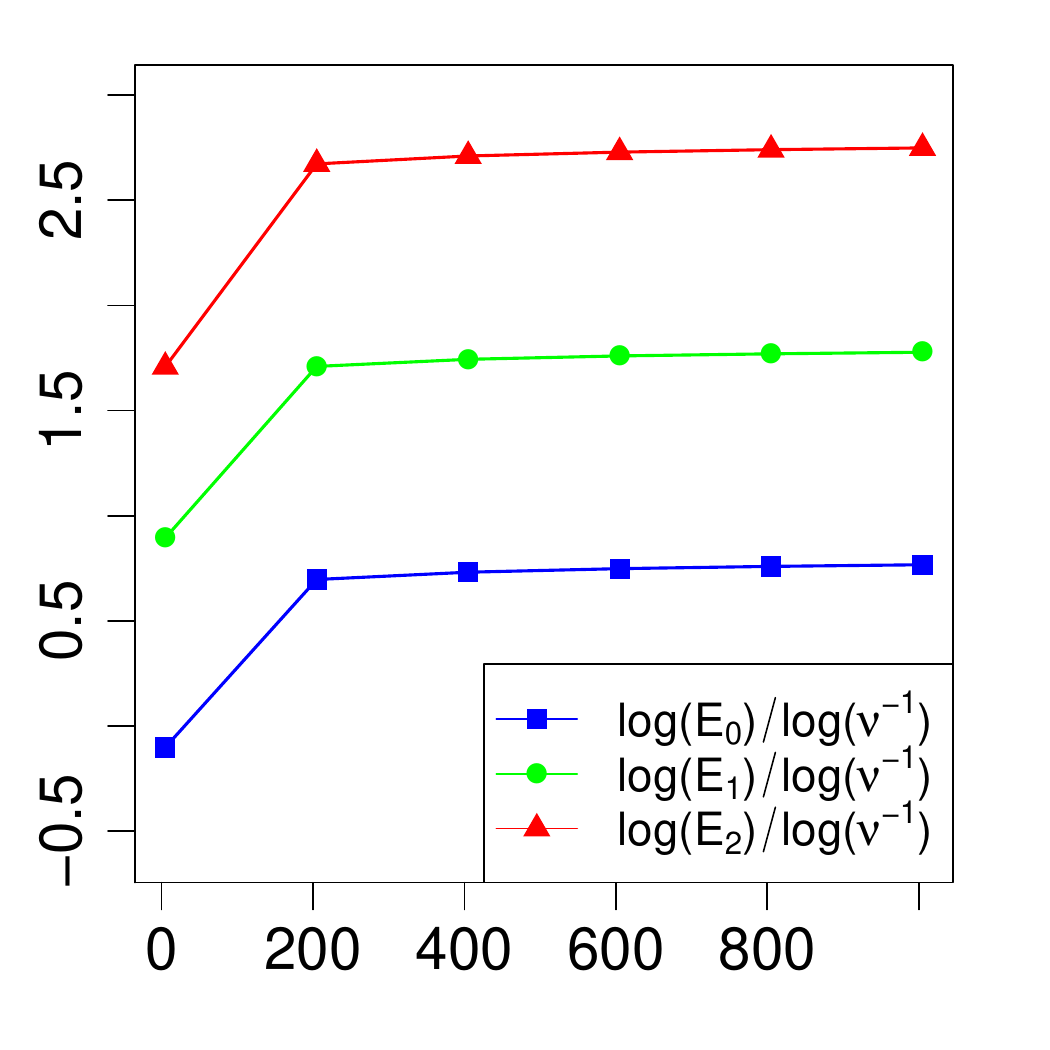}
            \vspace{-0.8cm}
            \caption{\scriptsize $\Sigma = \begin{pmatrix} 3 &\hspace{-2mm} 1 \\ 1 &\hspace{-2mm} 5\end{pmatrix}$, $\Omega = \begin{pmatrix} 1 &\hspace{-2mm} 0 \\ 0 &\hspace{-2mm} 1\end{pmatrix}$}
        \end{subfigure}
        \begin{subfigure}[b]{0.22\textwidth}
            \centering
            \includegraphics[width=\textwidth, height=0.85\textwidth]{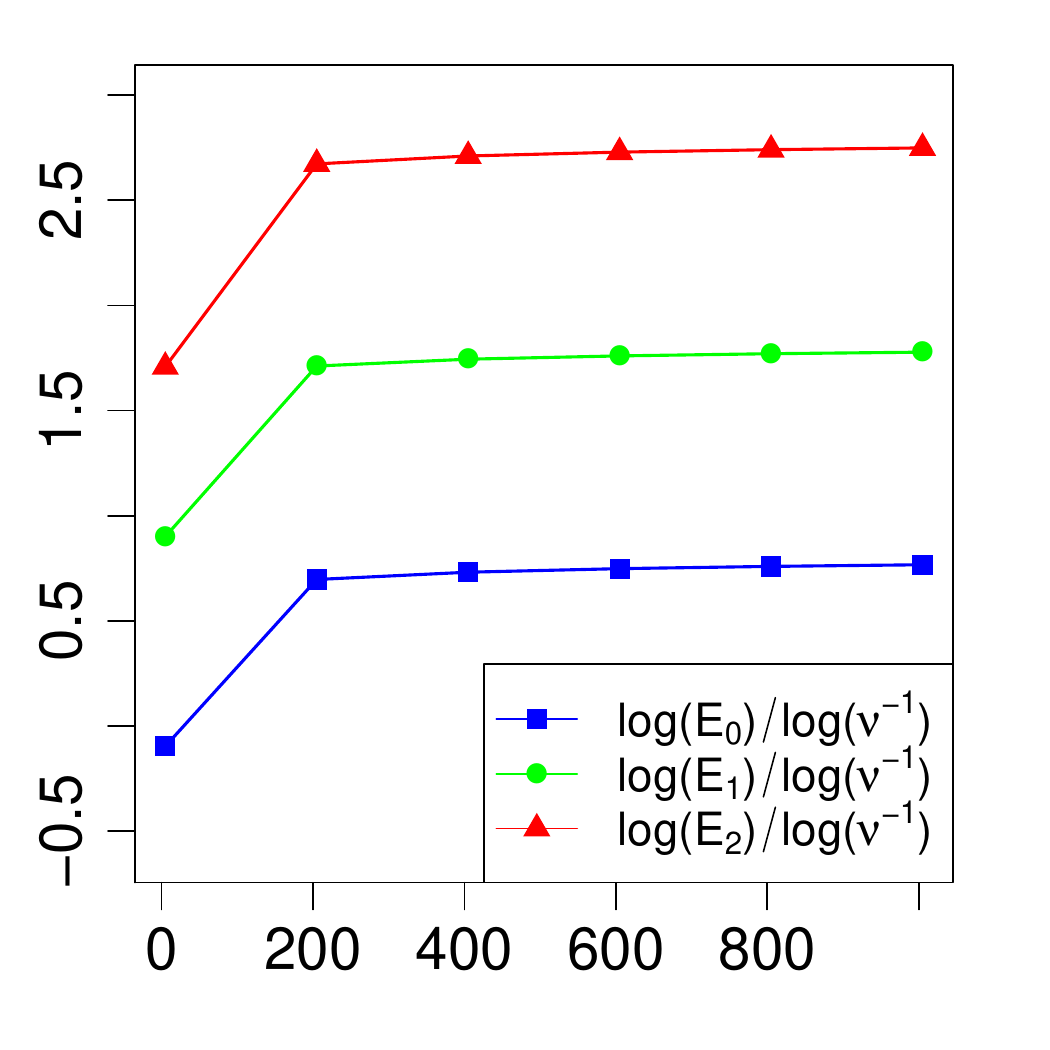}
            \vspace{-0.8cm}
            \caption{\scriptsize $\Sigma = \begin{pmatrix} 4 &\hspace{-2mm} 1 \\ 1 &\hspace{-2mm} 2\end{pmatrix}$, $\Omega = \begin{pmatrix} 1 &\hspace{-2mm} 0 \\ 0 &\hspace{-2mm} 1\end{pmatrix}$}
        \end{subfigure}
        ~~
        \begin{subfigure}[b]{0.22\textwidth}
            \centering
            \includegraphics[width=\textwidth, height=0.85\textwidth]{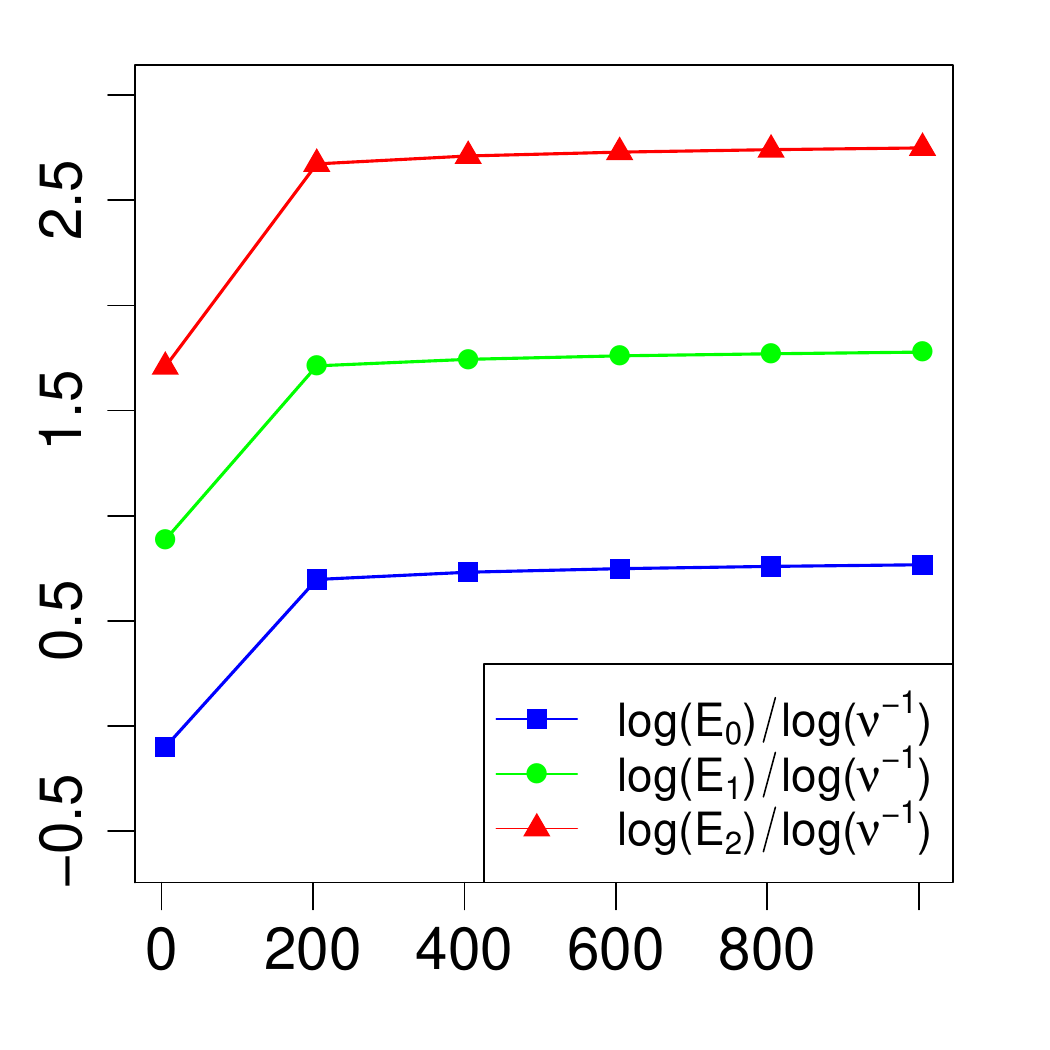}
            \vspace{-0.8cm}
            \caption{\scriptsize $\Sigma = \begin{pmatrix} 4 &\hspace{-2mm} 1 \\ 1 &\hspace{-2mm} 3\end{pmatrix}$, $\Omega = \begin{pmatrix} 1 &\hspace{-2mm} 0 \\ 0 &\hspace{-2mm} 1\end{pmatrix}$}
        \end{subfigure}
        ~~
        \begin{subfigure}[b]{0.22\textwidth}
            \centering
            \includegraphics[width=\textwidth, height=0.85\textwidth]{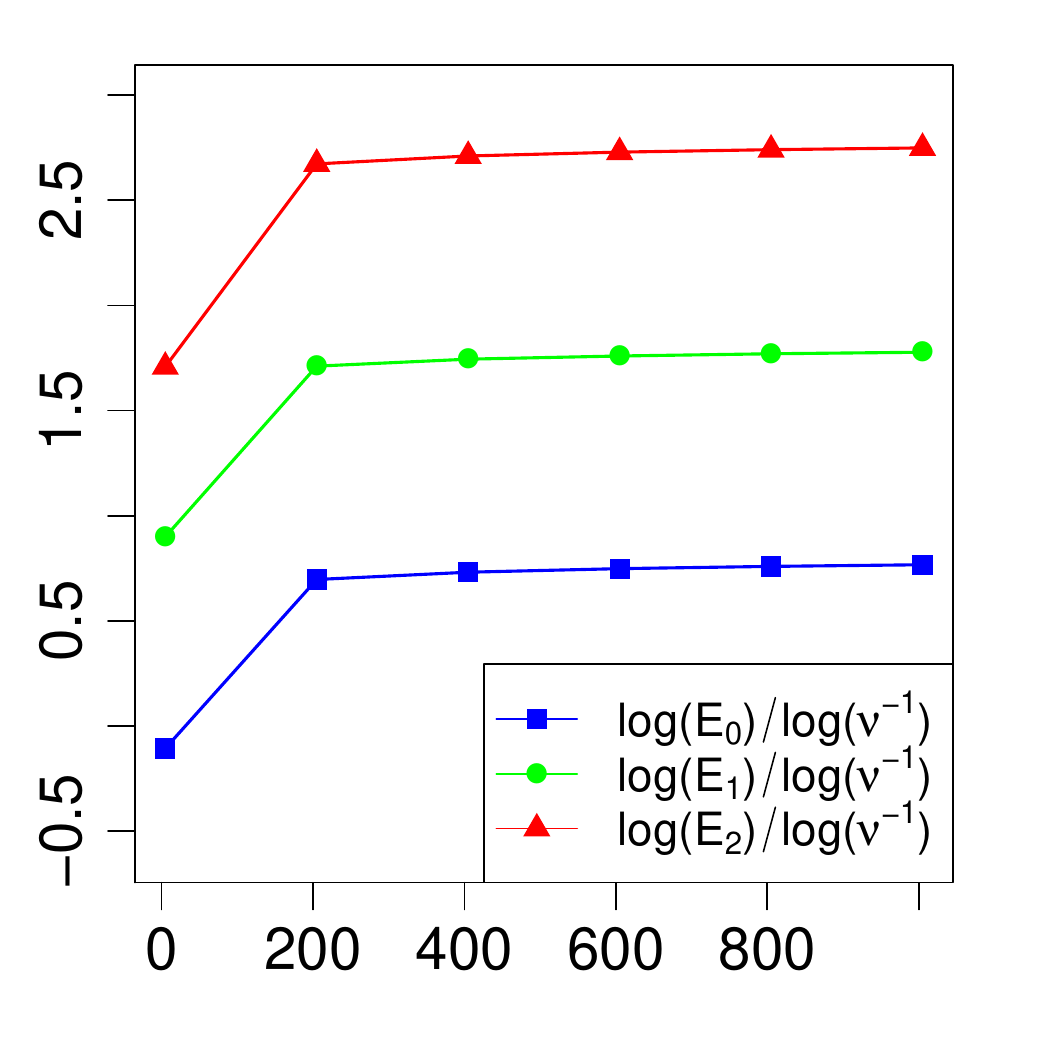}
            \vspace{-0.8cm}
            \caption{\scriptsize $\Sigma = \begin{pmatrix} 4 &\hspace{-2mm} 1 \\ 1 &\hspace{-2mm} 4\end{pmatrix}$, $\Omega = \begin{pmatrix} 1 &\hspace{-2mm} 0 \\ 0 &\hspace{-2mm} 1\end{pmatrix}$}
        \end{subfigure}
        ~~
        \begin{subfigure}[b]{0.22\textwidth}
            \centering
            \includegraphics[width=\textwidth, height=0.85\textwidth]{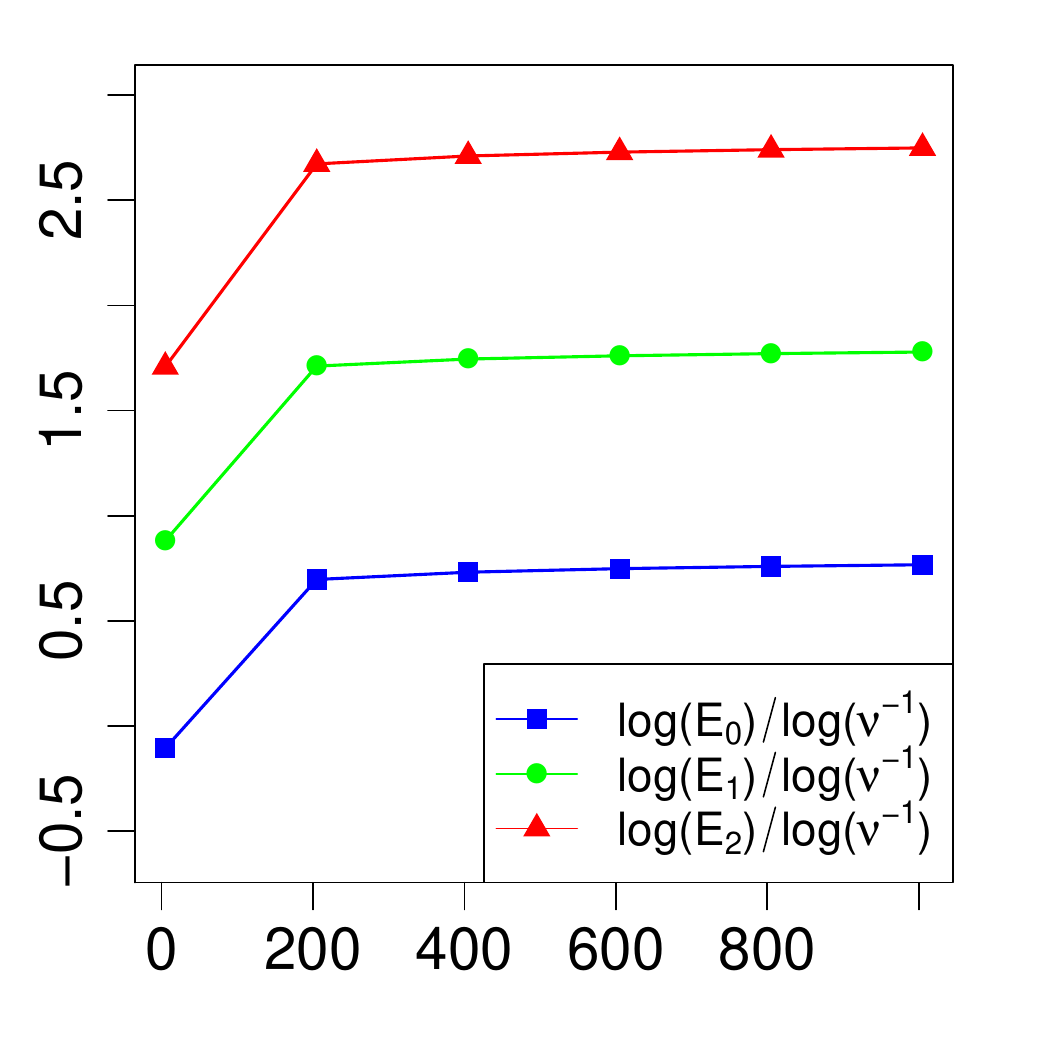}
            \vspace{-0.8cm}
            \caption{\scriptsize $\Sigma = \begin{pmatrix} 4 &\hspace{-2mm} 1 \\ 1 &\hspace{-2mm} 5\end{pmatrix}$, $\Omega = \begin{pmatrix} 1 &\hspace{-2mm} 0 \\ 0 &\hspace{-2mm} 1\end{pmatrix}$}
        \end{subfigure}
        \caption{\scriptsize Plots of $\log E_i / \log (\nu^{-1})$ as a function of $\nu$, for various choices of $\Sigma$. The plots confirm \eqref{eq:liminf.exponent.bound} for our choices of $\Sigma$ and bring strong evidence for the validity of Theorem~\ref{thm:LLT.matrix.T}.}
        \label{fig:error.exponents.plots}
    \end{figure}

    \vspace{3mm}
    \begin{figure}[H]
        \captionsetup[subfigure]{labelformat=empty}
        \vspace{-0.5cm}
        \centering
        \begin{subfigure}[b]{0.22\textwidth}
            \centering
            \includegraphics[width=\textwidth, height=0.85\textwidth]{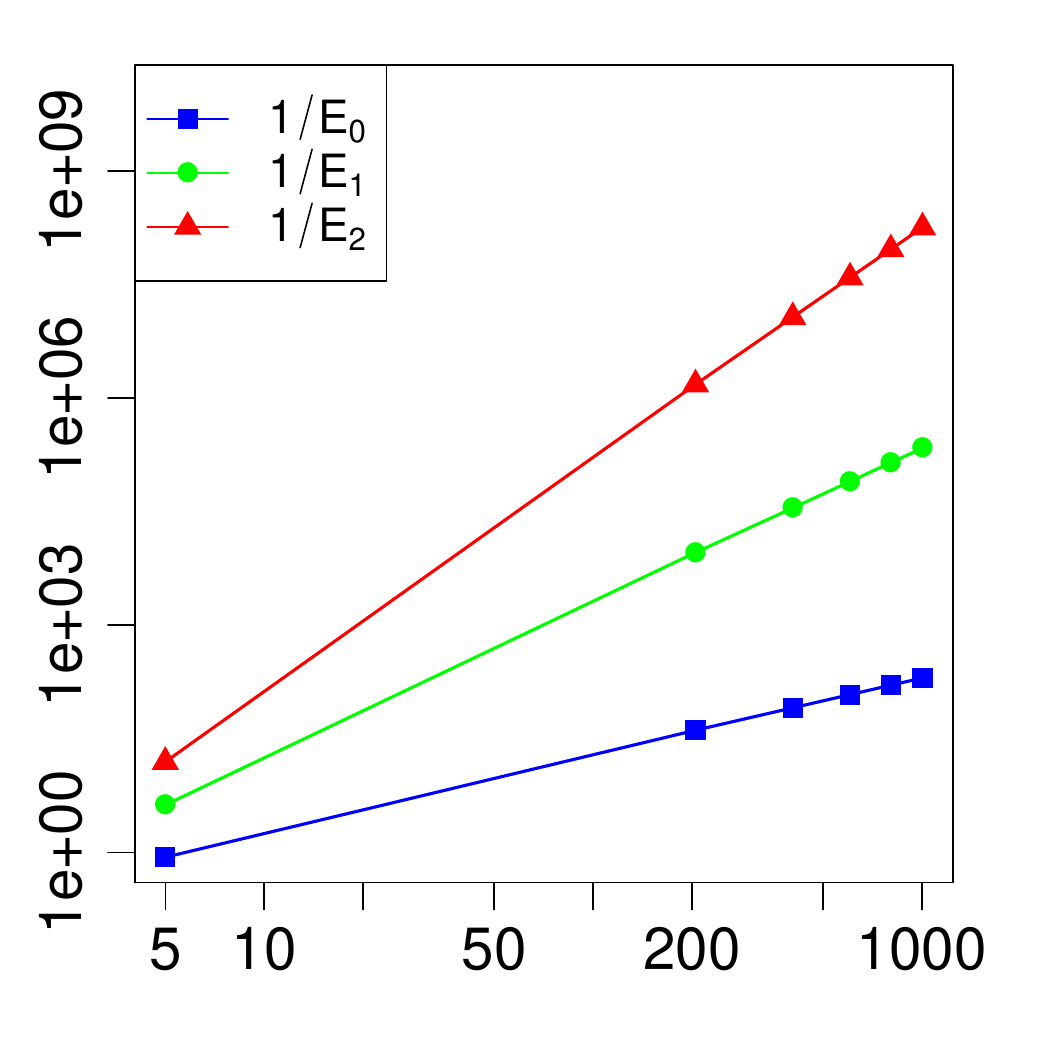}
            \vspace{-0.8cm}
            \caption{\scriptsize $\Sigma = \begin{pmatrix} 2 &\hspace{-2mm} 1 \\ 1 &\hspace{-2mm} 2\end{pmatrix}$, $\Omega = \begin{pmatrix} 1 &\hspace{-2mm} 0 \\ 0 &\hspace{-2mm} 1\end{pmatrix}$}
        \end{subfigure}
        ~~
        \begin{subfigure}[b]{0.22\textwidth}
            \centering
            \includegraphics[width=\textwidth, height=0.85\textwidth]{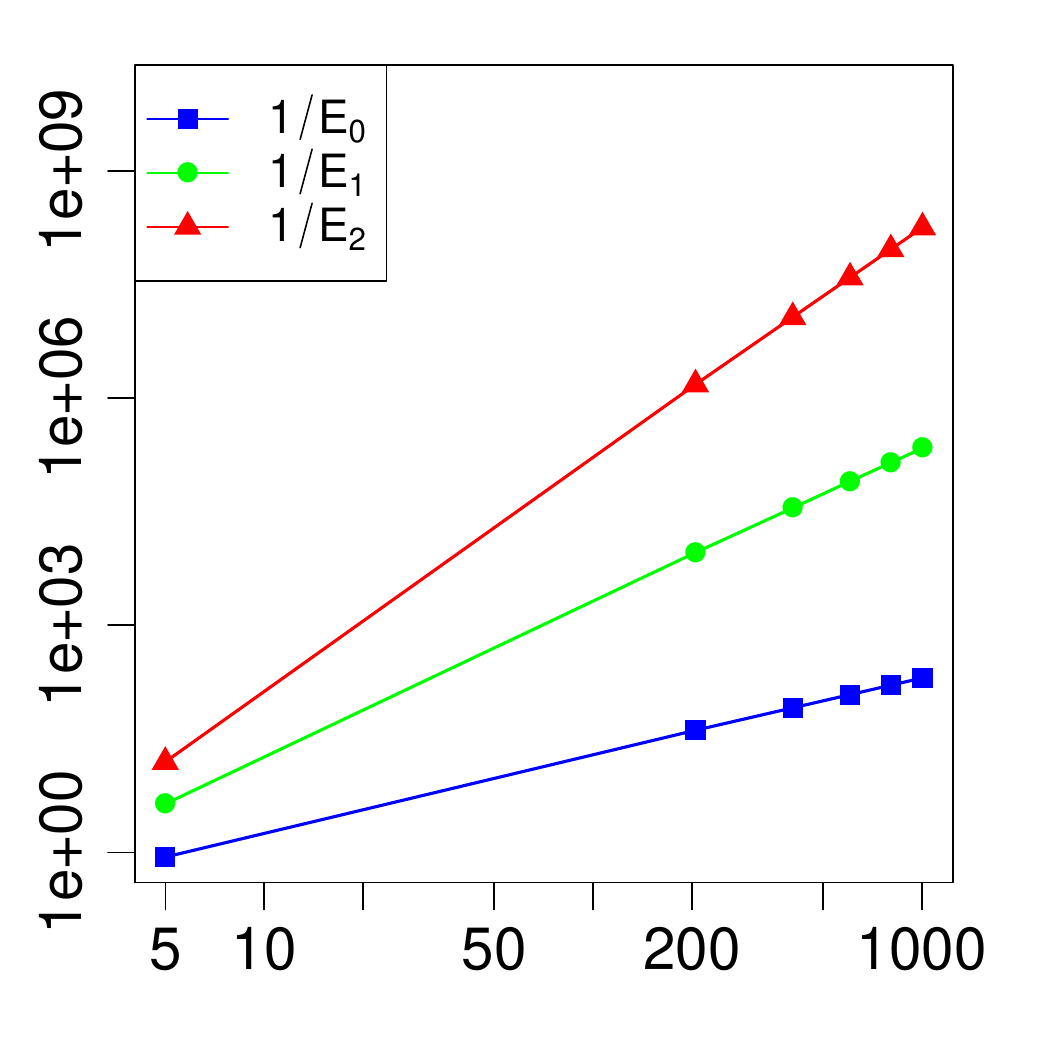}
            \vspace{-0.8cm}
            \caption{\scriptsize $\Sigma = \begin{pmatrix} 2 &\hspace{-2mm} 1 \\ 1 &\hspace{-2mm} 3\end{pmatrix}$, $\Omega = \begin{pmatrix} 1 &\hspace{-2mm} 0 \\ 0 &\hspace{-2mm} 1\end{pmatrix}$}
        \end{subfigure}
        ~~
        \begin{subfigure}[b]{0.22\textwidth}
            \centering
            \includegraphics[width=\textwidth, height=0.85\textwidth]{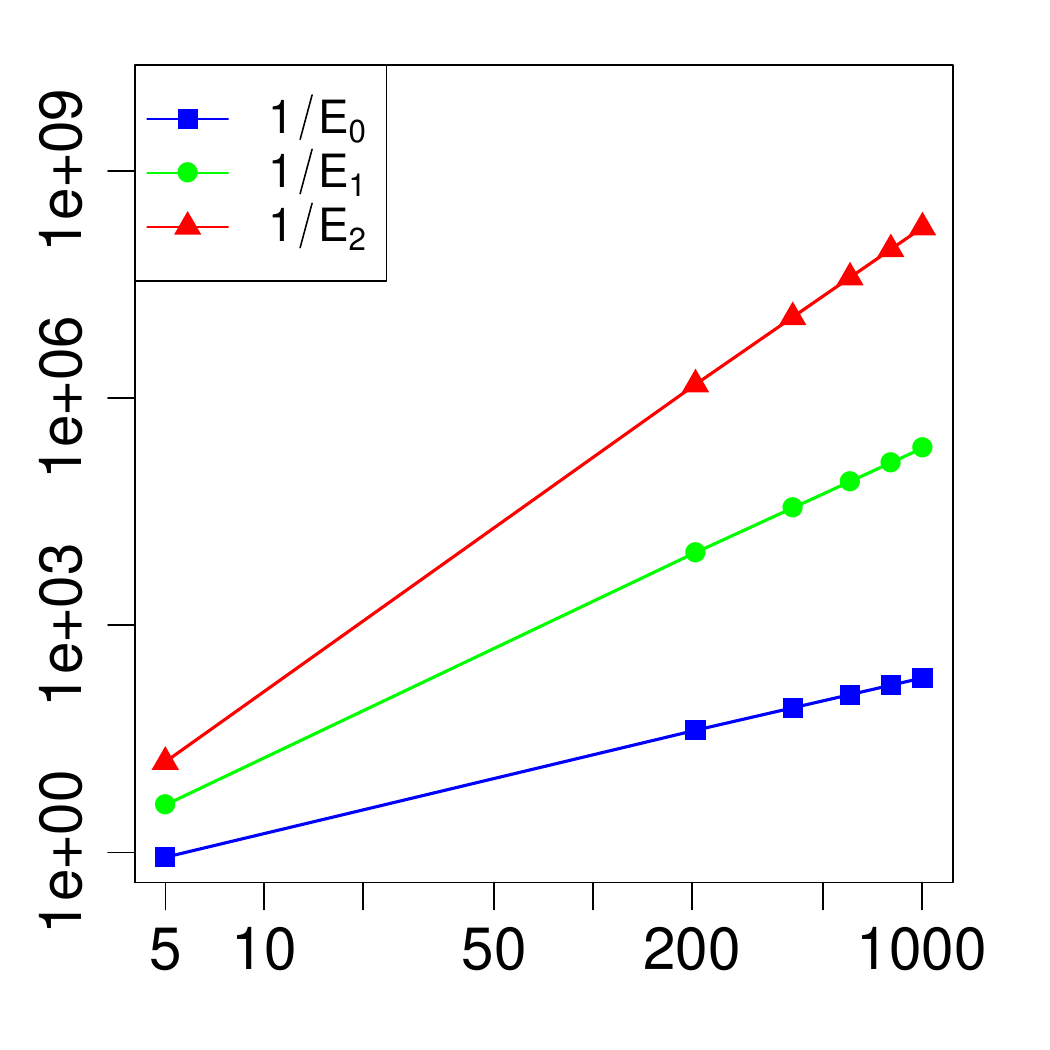}
            \vspace{-0.8cm}
            \caption{\scriptsize $\Sigma = \begin{pmatrix} 2 &\hspace{-2mm} 1 \\ 1 &\hspace{-2mm} 4\end{pmatrix}$, $\Omega = \begin{pmatrix} 1 &\hspace{-2mm} 0 \\ 0 &\hspace{-2mm} 1\end{pmatrix}$}
        \end{subfigure}
        ~~
        \begin{subfigure}[b]{0.22\textwidth}
            \centering
            \includegraphics[width=\textwidth, height=0.85\textwidth]{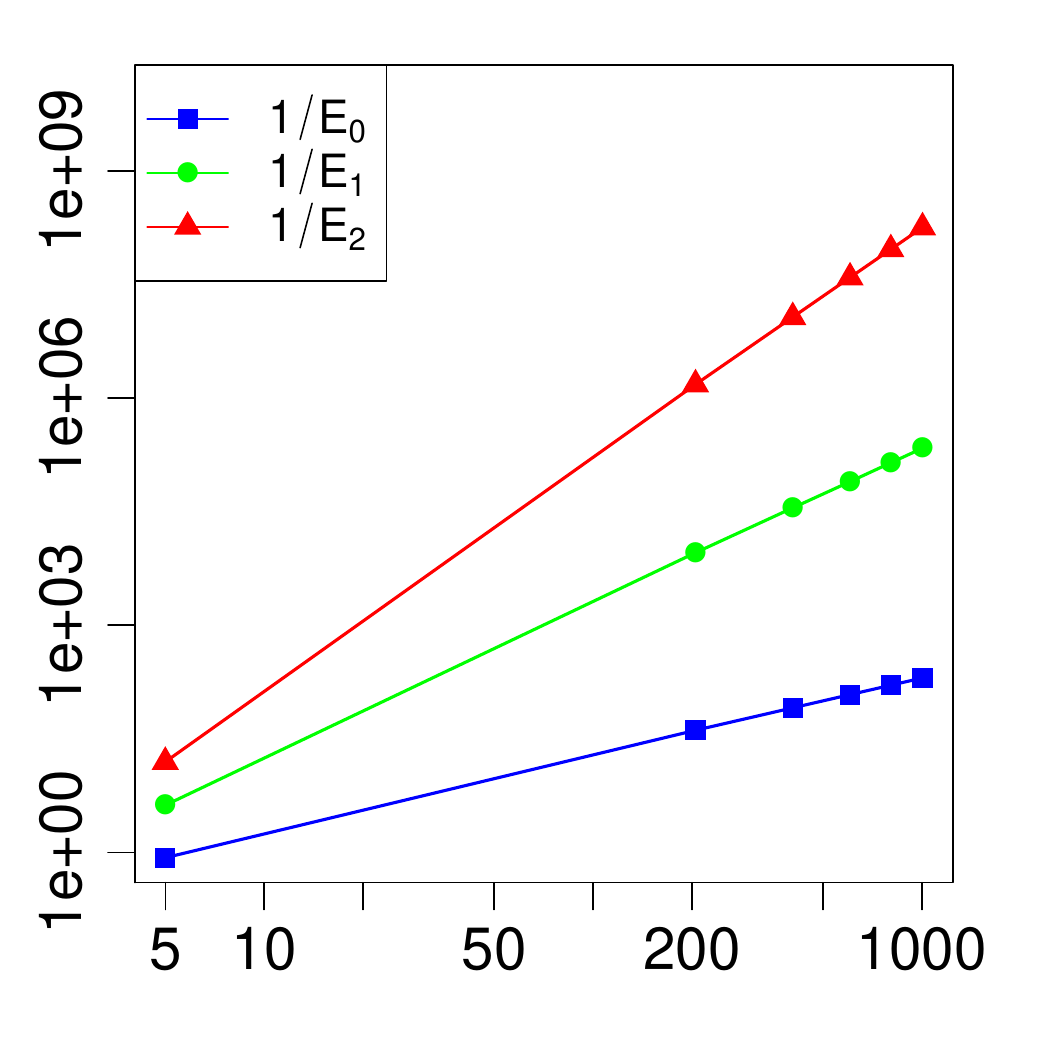}
            \vspace{-0.8cm}
            \caption{\scriptsize $\Sigma = \begin{pmatrix} 2 &\hspace{-2mm} 1 \\ 1 &\hspace{-2mm} 5\end{pmatrix}$, $\Omega = \begin{pmatrix} 1 &\hspace{-2mm} 0 \\ 0 &\hspace{-2mm} 1\end{pmatrix}$}
        \end{subfigure}
        \begin{subfigure}[b]{0.22\textwidth}
            \centering
            \includegraphics[width=\textwidth, height=0.85\textwidth]{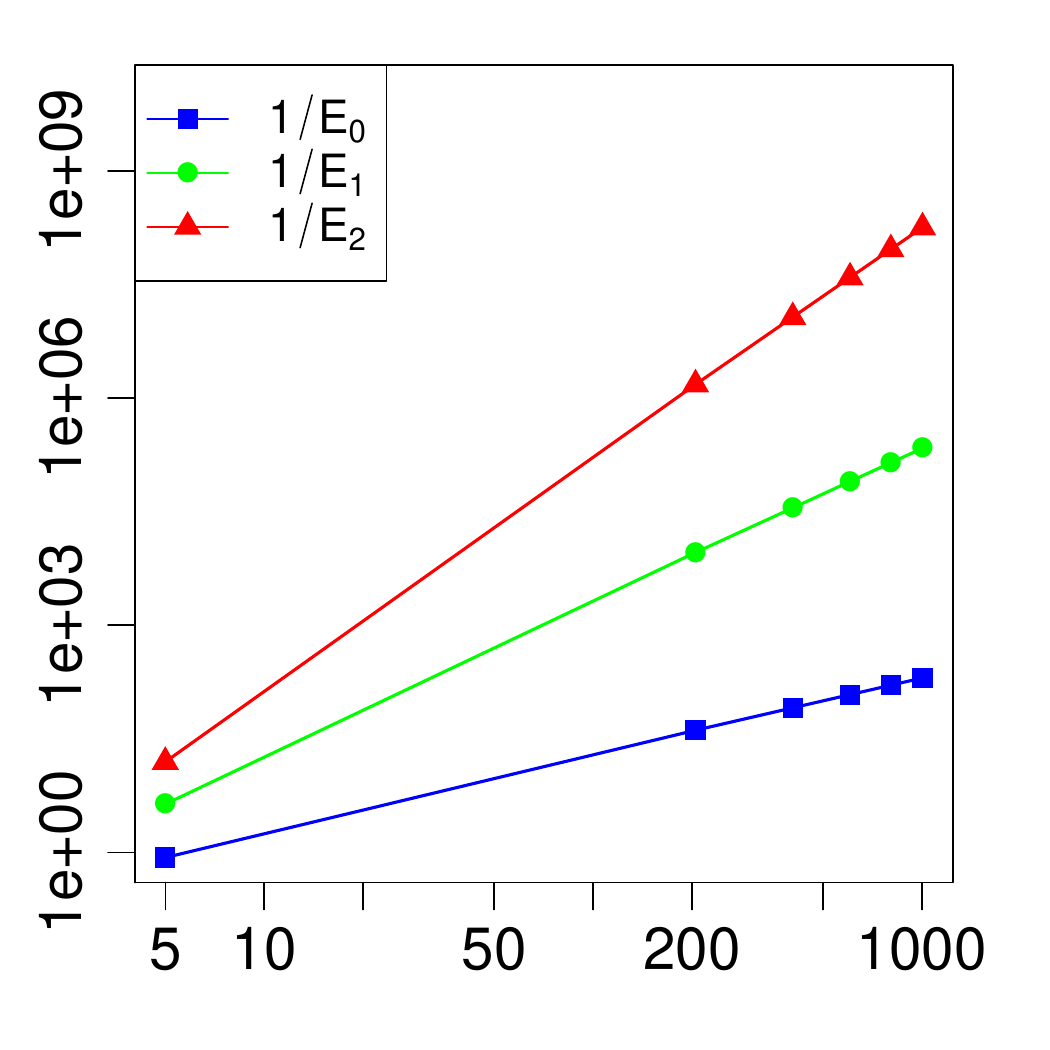}
            \vspace{-0.8cm}
            \caption{\scriptsize $\Sigma = \begin{pmatrix} 3 &\hspace{-2mm} 1 \\ 1 &\hspace{-2mm} 2\end{pmatrix}$, $\Omega = \begin{pmatrix} 1 &\hspace{-2mm} 0 \\ 0 &\hspace{-2mm} 1\end{pmatrix}$}
        \end{subfigure}
        ~~
        \begin{subfigure}[b]{0.22\textwidth}
            \centering
            \includegraphics[width=\textwidth, height=0.85\textwidth]{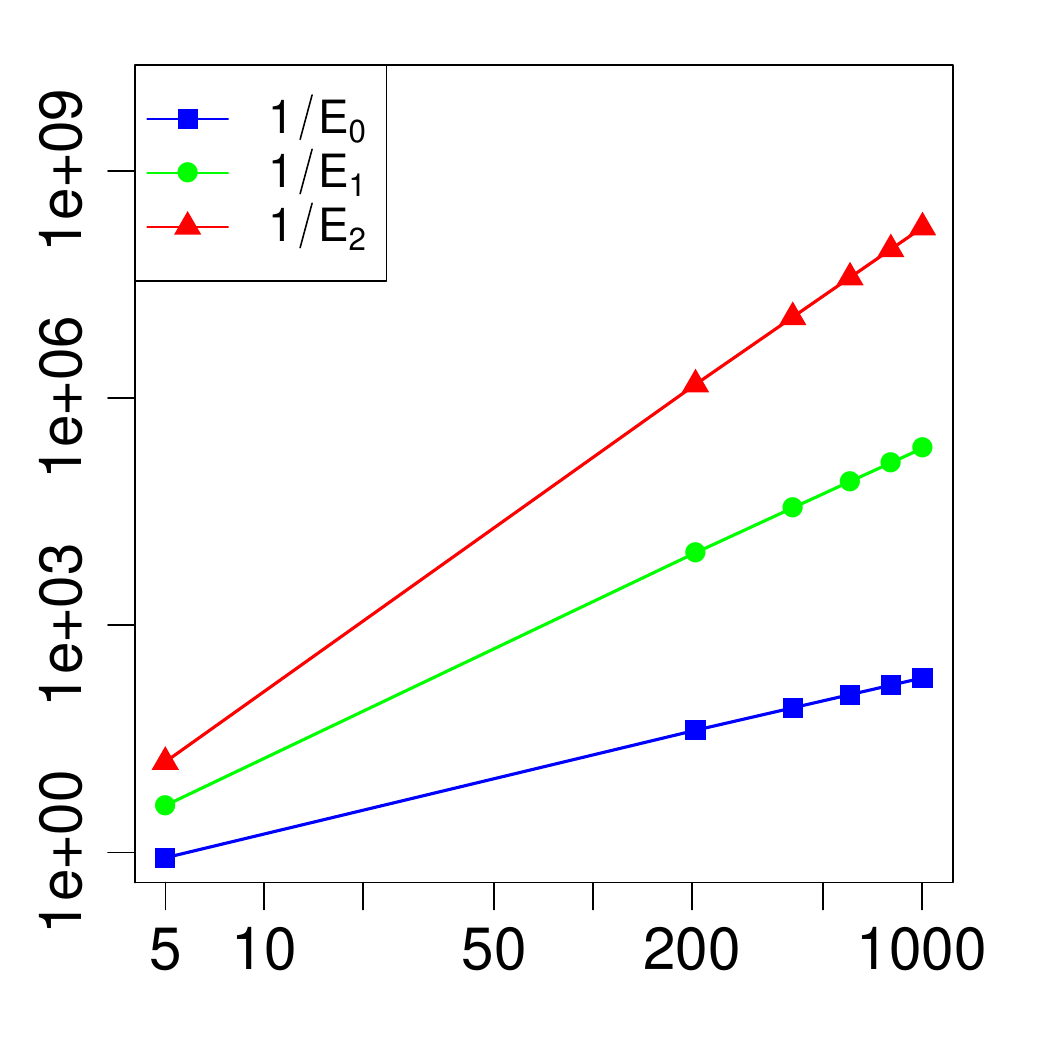}
            \vspace{-0.8cm}
            \caption{\scriptsize $\Sigma = \begin{pmatrix} 3 &\hspace{-2mm} 1 \\ 1 &\hspace{-2mm} 3\end{pmatrix}$, $\Omega = \begin{pmatrix} 1 &\hspace{-2mm} 0 \\ 0 &\hspace{-2mm} 1\end{pmatrix}$}
        \end{subfigure}
        ~~
        \begin{subfigure}[b]{0.22\textwidth}
            \centering
            \includegraphics[width=\textwidth, height=0.85\textwidth]{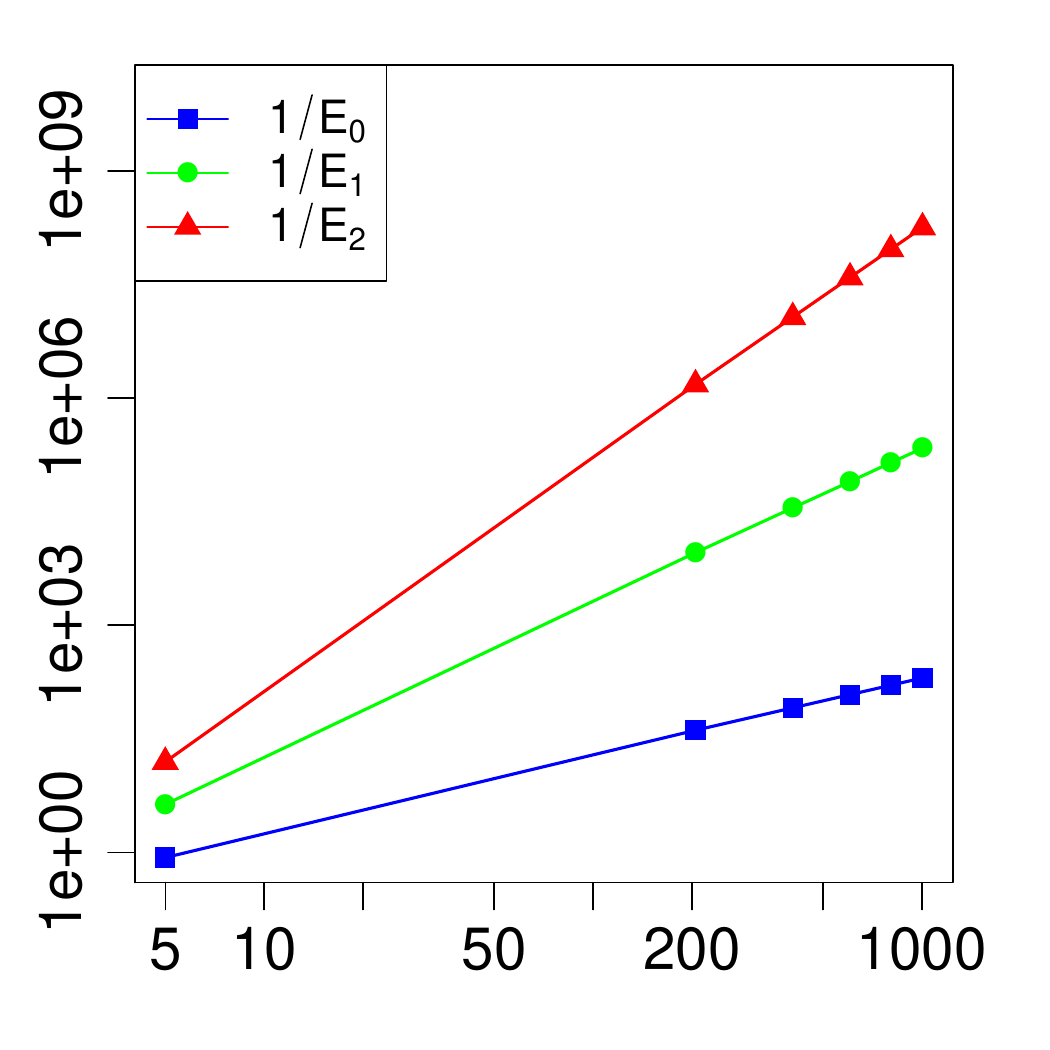}
            \vspace{-0.8cm}
            \caption{\scriptsize $\Sigma = \begin{pmatrix} 3 &\hspace{-2mm} 1 \\ 1 &\hspace{-2mm} 4\end{pmatrix}$, $\Omega = \begin{pmatrix} 1 &\hspace{-2mm} 0 \\ 0 &\hspace{-2mm} 1\end{pmatrix}$}
        \end{subfigure}
        ~~
        \begin{subfigure}[b]{0.22\textwidth}
            \centering
            \includegraphics[width=\textwidth, height=0.85\textwidth]{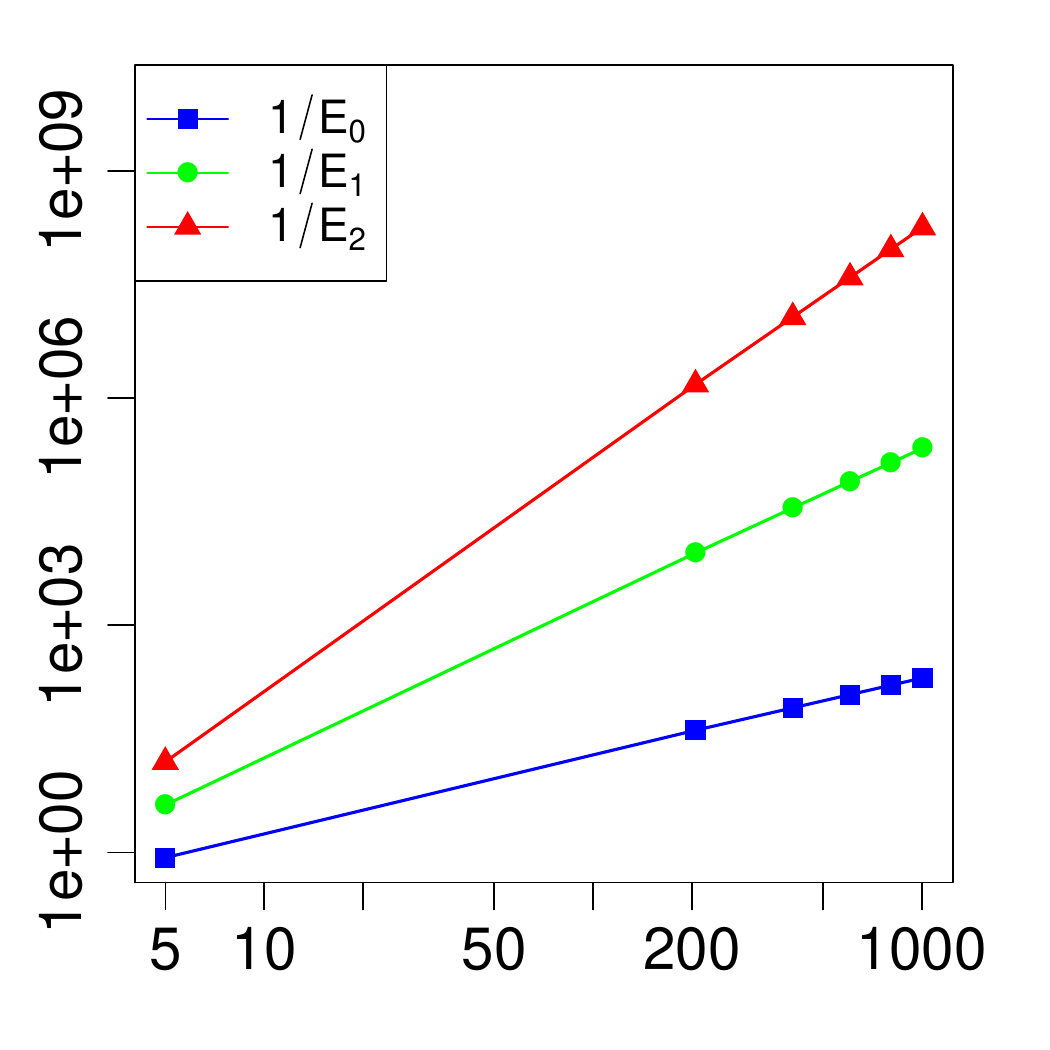}
            \vspace{-0.8cm}
            \caption{\scriptsize $\Sigma = \begin{pmatrix} 3 &\hspace{-2mm} 1 \\ 1 &\hspace{-2mm} 5\end{pmatrix}$, $\Omega = \begin{pmatrix} 1 &\hspace{-2mm} 0 \\ 0 &\hspace{-2mm} 1\end{pmatrix}$}
        \end{subfigure}
        \begin{subfigure}[b]{0.22\textwidth}
            \centering
            \includegraphics[width=\textwidth, height=0.85\textwidth]{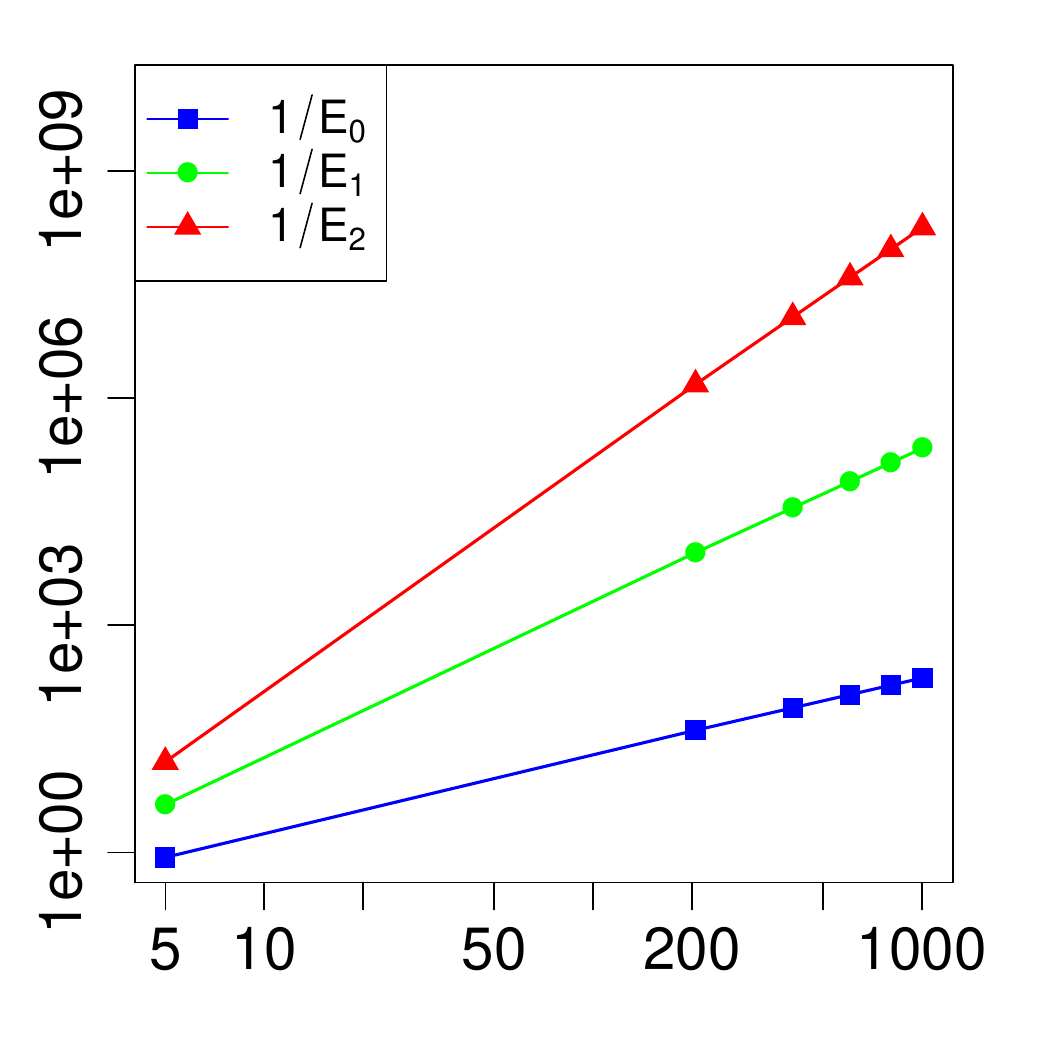}
            \vspace{-0.8cm}
            \caption{\scriptsize $\Sigma = \begin{pmatrix} 4 &\hspace{-2mm} 1 \\ 1 &\hspace{-2mm} 2\end{pmatrix}$, $\Omega = \begin{pmatrix} 1 &\hspace{-2mm} 0 \\ 0 &\hspace{-2mm} 1\end{pmatrix}$}
        \end{subfigure}
        ~~
        \begin{subfigure}[b]{0.22\textwidth}
            \centering
            \includegraphics[width=\textwidth, height=0.85\textwidth]{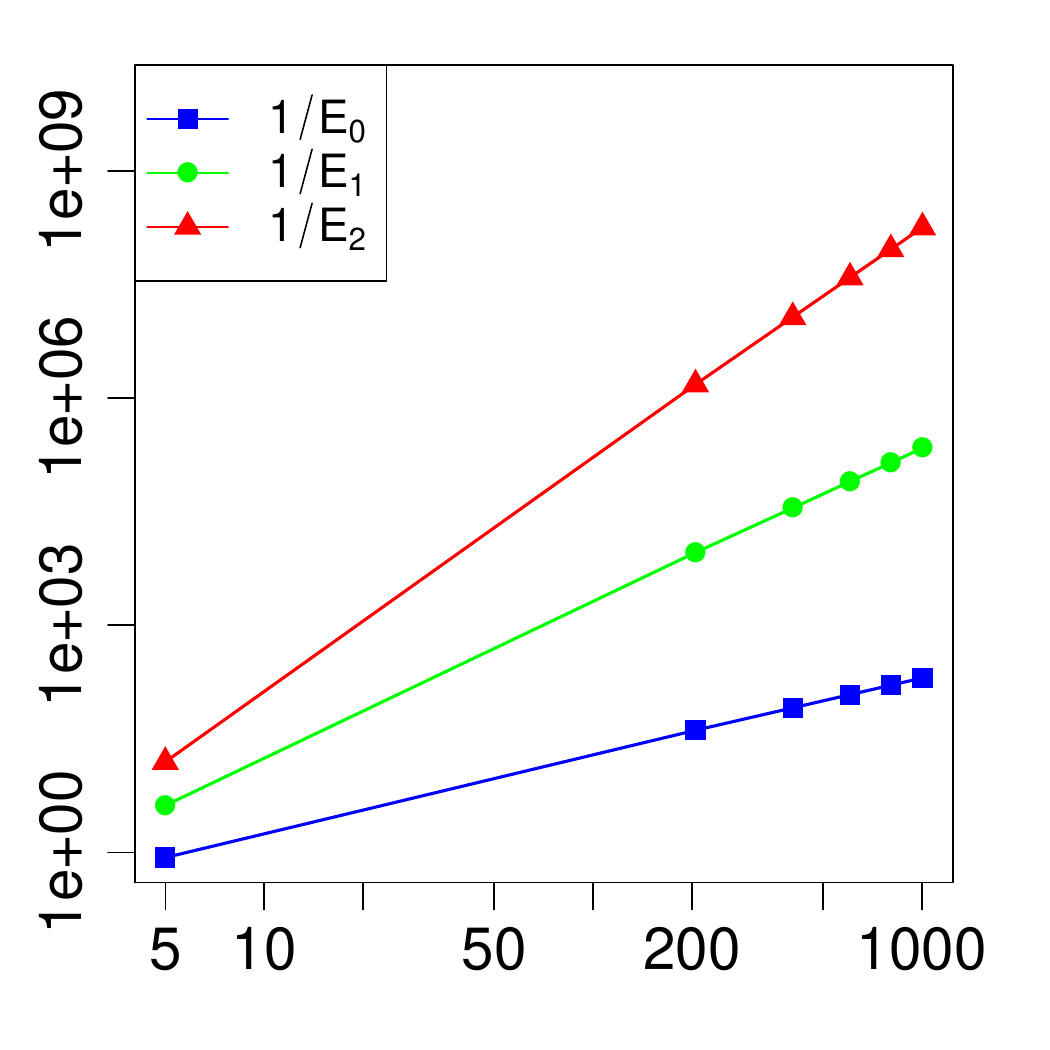}
            \vspace{-0.8cm}
            \caption{\scriptsize $\Sigma = \begin{pmatrix} 4 &\hspace{-2mm} 1 \\ 1 &\hspace{-2mm} 3\end{pmatrix}$, $\Omega = \begin{pmatrix} 1 &\hspace{-2mm} 0 \\ 0 &\hspace{-2mm} 1\end{pmatrix}$}
        \end{subfigure}
        ~~
        \begin{subfigure}[b]{0.22\textwidth}
            \centering
            \includegraphics[width=\textwidth, height=0.85\textwidth]{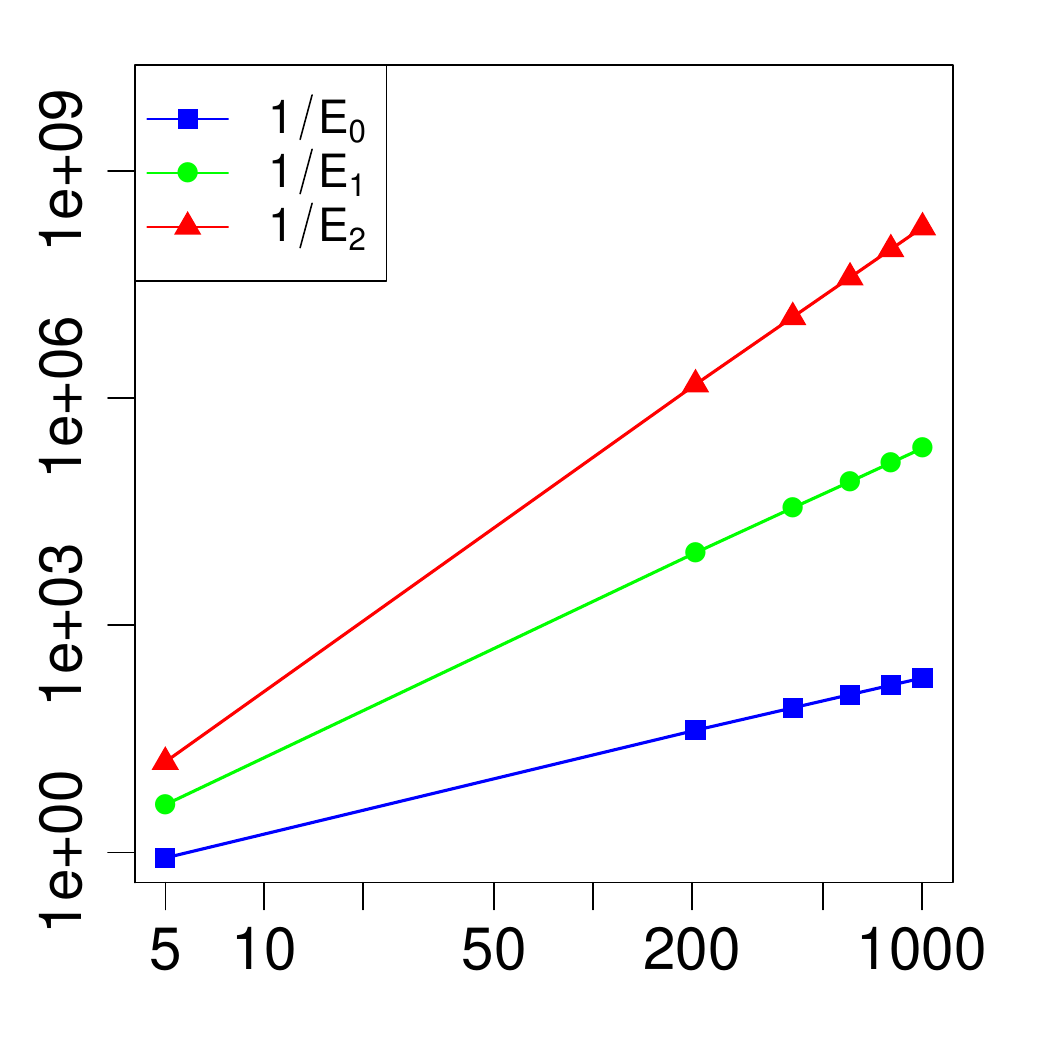}
            \vspace{-0.8cm}
            \caption{\scriptsize $\Sigma = \begin{pmatrix} 4 &\hspace{-2mm} 1 \\ 1 &\hspace{-2mm} 4\end{pmatrix}$, $\Omega = \begin{pmatrix} 1 &\hspace{-2mm} 0 \\ 0 &\hspace{-2mm} 1\end{pmatrix}$}
        \end{subfigure}
        ~~
        \begin{subfigure}[b]{0.22\textwidth}
            \centering
            \includegraphics[width=\textwidth, height=0.85\textwidth]{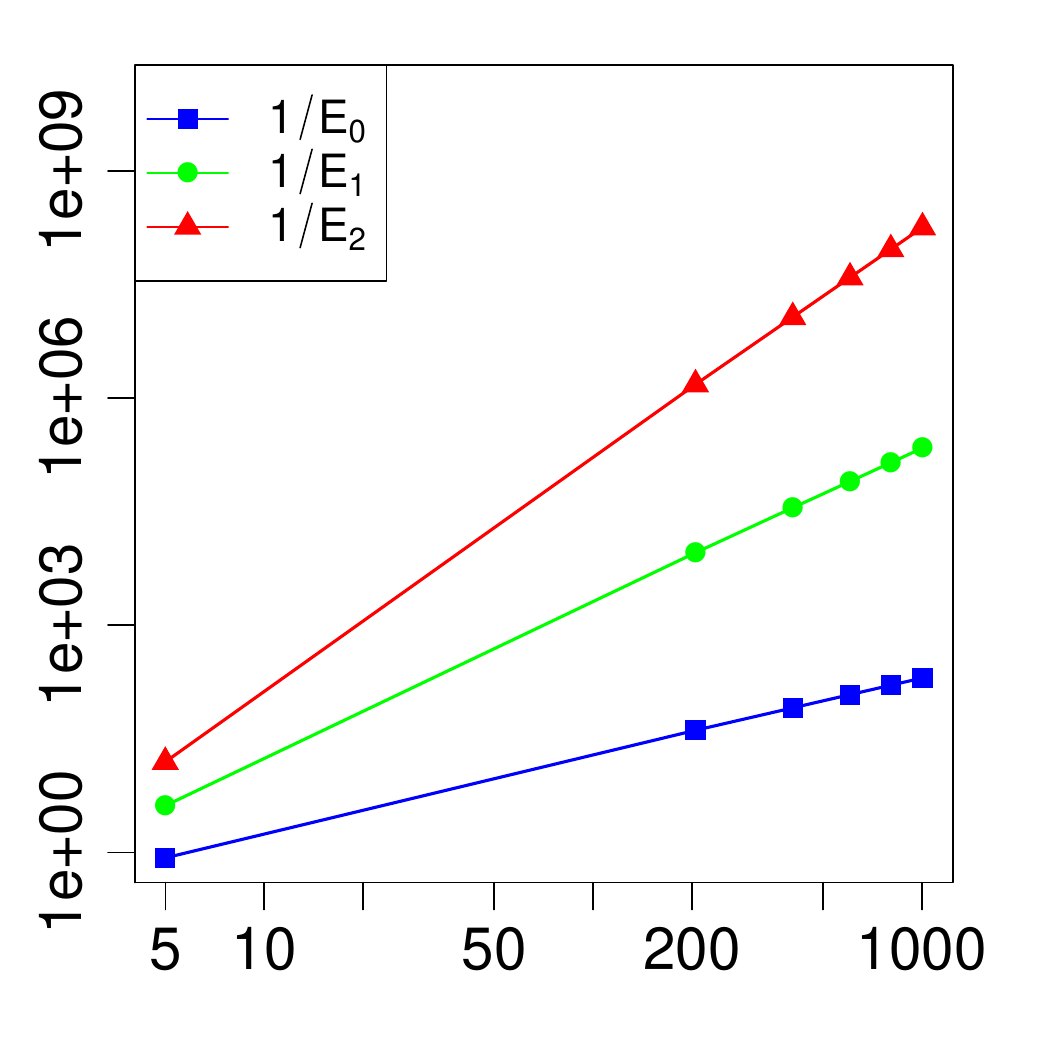}
            \vspace{-0.8cm}
            \caption{\scriptsize $\Sigma = \begin{pmatrix} 4 &\hspace{-2mm} 1 \\ 1 &\hspace{-2mm} 5\end{pmatrix}$, $\Omega = \begin{pmatrix} 1 &\hspace{-2mm} 0 \\ 0 &\hspace{-2mm} 1\end{pmatrix}$}
        \end{subfigure}
        \caption{\scriptsize Plots of $1 / E_i$ as a function of $\nu$, for various choices of $\Sigma$. Both the horizontal and vertical axes are on a logarithmic scale. The plots clearly illustrate how the addition of correction terms from Theorem~\ref{thm:LLT.matrix.T} to the base approximation \eqref{eq:E.0} improves it.}
        \label{fig:loglog.errors.plots}
    \end{figure}

\section{Proofs}\label{sec:proofs}

    \begin{proof}[\bf Proof of Theorem~\ref{thm:LLT.matrix.T}]
        First, we take the expression in \eqref{eq:matrix.T.density} over the one in \eqref{eq:sym.matrix.normal.density}:
        \begin{equation}\label{eq:proof.LLT.beginning}
            \begin{aligned}
                &\frac{[\nu / (\nu - 2)]^{md/2} \, K_{\nu,\Sigma,\Omega}(X)}{g_{\hspace{0.3mm}\Sigma,\Omega}(X / \sqrt{\nu / (\nu - 2)})} \\
                &\quad= \left[\frac{2}{\nu - 2}\right]^{md/2} \, \prod_{j=1}^d \frac{\Gamma(\frac{1}{2} (\nu + m + d - j))}{\Gamma(\frac{1}{2} (\nu + d - j))} \\
                &\quad\quad\cdot \exp\left(\frac{(\nu - 2)}{2 \nu} \mathrm{tr}\left(\Delta_X\Delta_X^{\top}\right)\right) \left|\mathrm{I}_d + \nu^{-1} \Delta_X\Delta_X^{\top}\right|^{-(\nu + m + d - 1)/2}.
            \end{aligned}
        \end{equation}
        The last determinant was obtained using the fact that the eigenvalues of a product of rectangular matrices are invariant under cyclic permutations (as long as the products remain well defined).
        Indeed, for all $j\in \{1,2,\dots,d\}$, we have
        \begin{equation}
            \begin{aligned}
                \lambda_j(\mathrm{I}_d + \nu^{-1} \Sigma^{-1} X \Omega^{-1} X^{\top})
                &= 1 + \nu^{-1} \lambda_j(\Sigma^{-1} X \Omega^{-1} X^{\top}) \\[1mm]
                &= 1 + \nu^{-1} \lambda_j(\Delta_X\Delta_X^{\top}) = \lambda_j(\mathrm{I}_d + \nu^{-1} \Delta_X\Delta_X^{\top}).
            \end{aligned}
        \end{equation}
        By taking the logarithm on both sides of \eqref{eq:proof.LLT.beginning}, we get
        \begin{equation}\label{eq:LLT.beginning.next.1}
            \begin{aligned}
                &\log \left(\frac{[\nu / (\nu - 2)]^{md/2} \, K_{\nu,\Sigma,\Omega}(X)}{g_{\hspace{0.3mm}\Sigma,\Omega}(X / \sqrt{\nu / (\nu - 2)})}\right) \\
                &\quad= -\frac{md}{2} \log\left(\frac{\nu - 2}{2}\right) + \sum_{j=1}^d \left[\log \Gamma(\tfrac{1}{2}(\nu + m + d - j)) - \log \Gamma(\tfrac{1}{2}(\nu + d - j))\right] \\[-1mm]
                &\quad\quad+ \frac{1}{2} \sum_{j=1}^d \delta_{\lambda_j}^2 - \frac{(\nu + m + d - 1)}{2} \sum_{j=1}^d \log \left(1 + \left(\frac{\delta_{\lambda_j}}{\sqrt{\nu - 2}}\right)^2\right).
            \end{aligned}
        \end{equation}
        By applying the Taylor expansions,
        \begin{equation}
            \begin{aligned}
                &\log \Gamma(\tfrac{1}{2}(\nu + m + d - j)) - \log \Gamma(\tfrac{1}{2}(\nu + d - j)) \\[1mm]
                &= \frac{1}{2} (\nu + m + d - j - 1) \log \left(\frac{1}{2} (\nu + m + d - j)\right) - \frac{1}{2} (\nu + d - j - 1) \log \left(\frac{1}{2} (\nu + d - j)\right) \\
                &\quad- \frac{m}{2} + \frac{2}{12 (\nu + m + d - j)} - \frac{2}{12 (\nu + d - j)} \\
                &\quad- \frac{2^3}{360 (\nu + m + d - j)^3} + \frac{2^3}{360 (\nu + d - j)^3} + \OO_{m,d}(\nu^{-4}) \\[2mm]
                &= \frac{m}{2} \log \left(\frac{\nu}{2}\right) + \frac{m (-2 + 2 d - 2 j + m)}{4 \nu} - \frac{m}{12 \nu^2} \left\{\hspace{-1mm}
                \begin{array}{l}
                    2 + 3 d^{\hspace{0.2mm}2} + 3 j^2 - 3 j (-2 + m) \\
                    - 3 m + m^2 + d (-6 - 6 j + 3 m)
                \end{array}
                \hspace{-1mm}\right\} \\[0.5mm]
                &\quad+ \frac{m}{24 \nu^3} \left\{\hspace{-1mm}
                \begin{array}{l}
                    4 d^{\hspace{0.2mm}3} - 4 j^3 - 6 d^{\hspace{0.2mm}2} (2 + 2 j - m) + 6 j^2 (-2 + m) + (-2 + m)^2 m \\
                    - 4 j (2 - 3 m + m^2) + 4 d (2 + 3 j^2 - 3 j (-2 + m) - 3 m + m^2)
                \end{array}
                \hspace{-1mm}\right\} + \OO_{m,d}(\nu^{-4}).
            \end{aligned}
        \end{equation}
        (see, e.g., (Ref. \cite{MR0167642}, p.~257))
        and
        \begin{equation}
            -\frac{md}{2} \log \left(\frac{\nu - 2}{2}\right) + \frac{md}{2} \log \left(\frac{\nu}{2}\right) = \frac{4md}{4 \nu} + \frac{12 md}{12 \nu^2} + \frac{32 md}{24 \nu^3} + \OO_{m,d}(\nu^{-4}),
        \end{equation}
        and
        \begin{equation}
            \log (1 + y) = y - \frac{1}{2} y^2 + \frac{1}{3} y^3 - \frac{1}{4} y^4 + \OO_{\eta}(y^5), \quad |y| < \eta < 1,
        \end{equation}
        in the above equation, we obtain
        \begin{align}\label{eq:LLT.beginning.next.2}
            &\log \left(\frac{[\nu / (\nu - 2)]^{md/2} \, K_{\nu,\Sigma,\Omega}(X)}{g_{\hspace{0.3mm}\Sigma,\Omega}(X / \sqrt{\nu / (\nu - 2)})}\right) \notag \\
            &= \sum_{j=1}^d \frac{m (2 + 2 d - 2 j + m)}{4 \nu} - \sum_{j=1}^d \frac{m}{12 \nu^2}
                \left\{\hspace{-1mm}
                \begin{array}{l}
                    -10 + 3 d^{\hspace{0.2mm}2} + 3 j^2 - 3 j (-2 + m) \\
                    - 3 m + m^2 + d (-6 - 6 j + 3 m)
                \end{array}
                \hspace{-1mm}\right\} \notag \\[0.5mm]
            &\quad+ \sum_{j=1}^d \frac{m}{24 \nu^3} \left\{\hspace{-1mm}
                \begin{array}{l}
                    32 + 4 d^{\hspace{0.2mm}3} - 4 j^3 - 6 d^{\hspace{0.2mm}2} (2 + 2 j - m) + 6 j^2 (-2 + m) + (-2 + m)^2 m \\
                    - 4 j (2 - 3 m + m^2) + 4 d (2 + 3 j^2 - 3 j (-2 + m) - 3 m + m^2)
                \end{array}
                \hspace{-1mm}\right\} \notag \\
            &\quad+ \frac{1}{2} \sum_{j=1}^d \delta_{\lambda_j}^2 - \frac{(\nu + m + d - 1)}{2} \sum_{j=1}^d \left(\frac{\delta_{\lambda_j}}{\sqrt{\nu - 2}}\right)^2 \notag \\
            &\quad\quad+ \frac{(\nu + m + d - 1)}{4} \sum_{j=1}^d \left(\frac{\delta_{\lambda_j}}{\sqrt{\nu - 2}}\right)^4 - \frac{(\nu + m + d - 1)}{6} \sum_{j=1}^d \left(\frac{\delta_{\lambda_j}}{\sqrt{\nu - 2}}\right)^6 \notag \\
            &\quad\quad+ \frac{(\nu + m + d - 1)}{8} \sum_{j=1}^d \left(\frac{\delta_{\lambda_j}}{\sqrt{\nu - 2}}\right)^8 + \OO_{d,m,\eta}\left(\frac{1 + \max_{1 \leq j \leq d} |\delta_{\lambda_j}|^{10}}{\nu^4}\right).
        \end{align}
        Now,
        \begin{equation}
            \begin{aligned}
                \frac{1}{2} - \frac{\nu + m + d - 1}{2 (\nu - 2)} &= -\frac{(m + d + 1)}{2\nu} - \frac{(m + d + 1)}{\nu^2} - \frac{2 (m + d + 1)}{\nu^3} + \OO_{m,d}(\nu^{-4}), \\
                \frac{\nu + m + d - 1}{4 (\nu - 2)^2} &= \frac{1}{4 \nu} + \frac{(m + d + 3)}{4 \nu^2} + \frac{(m + d + 2)}{\nu^3} + \OO_{m,d}(\nu^{-4}), \\
                -\frac{\nu + m + d - 1}{6 (\nu - 2)^3} &= -\frac{1}{6 \nu^2} - \frac{(m + d + 5)}{6 \nu^3} + \OO_{m,d}(\nu^{-4}), \\
                \frac{\nu + m + d - 1}{8 (\nu - 2)^4} &= \frac{1}{8 \nu^3} + \OO_{m,d}(\nu^{-4}),
            \end{aligned}
        \end{equation}
        so we can rewrite \eqref{eq:LLT.beginning.next.2} as
        \begin{equation}\label{eq:LLT.beginning.next.3}
            \begin{aligned}
                &\log \left(\frac{[\nu / (\nu - 2)]^{md/2} \, K_{\nu,\Sigma,\Omega}(X)}{g_{\hspace{0.3mm}\Sigma,\Omega}(X / \sqrt{\nu / (\nu - 2)})}\right) \\
                &= \nu^{-1} \sum_{j=1}^d \left\{\frac{1}{4} \delta_{\lambda_j}^4 - \frac{(m + d + 1)}{2} \delta_{\lambda_j}^2 + \frac{m (2 + 2 d - 2 j + m)}{4}\right\} \\
                &\quad+ \nu^{-2} \sum_{j=1}^d \left\{\hspace{-1mm}
                    \begin{array}{l}
                        -\frac{1}{6} \delta_{\lambda_j}^6 + \frac{(m + d + 3)}{4} \delta_{\lambda_j}^4 - (m + d + 1) \delta_{\lambda_j}^2 \\[1mm]
                        - \frac{m}{12}
                        \left\{\hspace{-1mm}
                        \begin{array}{l}
                            -10 + 3 d^{\hspace{0.2mm}2} + 3 j^2 - 3 j (-2 + m) \\
                            - 3 m + m^2 + d (-6 - 6 j + 3 m)
                        \end{array}
                        \hspace{-1mm}\right\}
                    \end{array}
                    \hspace{-1mm}\right\} \\
                &\quad+ \nu^{-3} \sum_{j=1}^d \left\{\hspace{-1mm}
                    \begin{array}{l}
                        \frac{1}{8} \delta_{\lambda_j}^8 - \frac{(m + d + 5)}{6} \delta_{\lambda_j}^6 + (m + d + 2) \delta_{\lambda_j}^4 - 2 (m + d + 1) \delta_{\lambda_j}^2 \\[1mm]
                        + \frac{m}{24} \left\{\hspace{-1mm}
                        \begin{array}{l}
                            32 + 4 d^{\hspace{0.2mm}3} - 4 j^3 - 6 d^{\hspace{0.2mm}2} (2 + 2 j - m) + 6 j^2 (-2 + m) + (-2 + m)^2 m \\
                            - 4 j (2 - 3 m + m^2) + 4 d (2 + 3 j^2 - 3 j (-2 + m) - 3 m + m^2)
                        \end{array}
                        \hspace{-1mm}\right\}
                    \end{array}
                    \hspace{-1mm}\right\} \\
                &\quad+ \OO_{d,m,\eta}\left(\frac{1 + \max_{1 \leq j \leq d} |\delta_{\lambda_j}|^{10}}{\nu^4}\right),
            \end{aligned}
        \end{equation}
        which proves \eqref{eq:LLT.order.2.log} after some simplifications with \texttt{Mathematica}.
    \end{proof}

    \begin{proof}[\bf Proof of Theorem~\ref{thm:probability.metric.bounds}]
        By the comparison of the total variation norm $\|\cdot\|$ with the Hellinger distance on page 726 of \citet{MR1922539}, we already know that
        \begin{equation}\label{eq:first.bound.probability.metric.bounds}
            \begin{aligned}
                &\|\PP_{\nu,\Sigma,\Omega} - \QQ_{\Sigma,\Omega}\| \\
                &\quad\leq \sqrt{2 \, \PP\left(X\in B_{\nu,\Sigma,\Omega}^{\hspace{0.3mm}c}(1/2)\right) + \EE\left[\log\Bigg(\frac{\rd \PP_{\nu,\Sigma,\Omega}}{\rd \QQ_{\Sigma,\Omega}}(X)\Bigg) \, \ind_{\{X\in B_{\nu,\Sigma,\Omega}(1/2)\}}\right]}.
            \end{aligned}
        \end{equation}
        Given that $\Delta_X = \Sigma^{-1/2} X \Omega^{-1/2}\sim T_{d,m}(\nu,\mathrm{I}_d,\mathrm{I}_m)$ by Theorem~4.3.5 in \cite{Gupta_Nagar_1999}, we know, by Theorem~4.2.1 in \cite{Gupta_Nagar_1999}, that
        \begin{equation}
            \Delta_X \stackrel{\mathrm{law}}{=} (\nu^{-1} \mat{S})^{-1/2} Z,
        \end{equation}
        for $\mat{S}\sim \mathrm{Wishart}_{d\times d}(\nu + d - 1, \mathrm{I}_d)$ and $Z\sim \mathrm{MN}_{d\times m}(0_{d\times m}, \mathrm{I}_d \otimes \mathrm{I}_m)$ that are independent,
        so that, by Theorems~3.3.1 and 3.3.3~in \cite{Gupta_Nagar_1999}, we have
        \begin{equation}\label{eq:conditional}
            \Delta_X \Delta_X^{\top} \nvert \mat{S}\sim \mathrm{Wishart}_{d\times d} (m, \nu \, \mat{S}^{-1}).
        \end{equation}
        Therefore, by conditioning on $\mat{S}$, and then by applying the sub-multiplicativity of the largest eigenvalue for nonnegative definite matrices, and a large deviation bound on the maximum eigenvalue of a Wishart matrix (which is sub-exponential), we get, for $\nu$ large enough,
        \begin{align}\label{eq:concentration.bound}
            \PP\left(X\in B_{\nu,\Sigma,\Omega}^{\hspace{0.3mm}c}(1/2)\right)
            &\leq \EE\left[\left.\PP\left(\lambda_1(\Delta_X \Delta_X^{\top}) > \frac{\nu^{1/2}}{4}~\right| \, \mat{S}\right)\right] \notag \\
            &\leq \EE\left[\left.\PP\left(\lambda_1((\nu^{-1} \mat{S})^{-1/2}) \lambda_1(Z Z^{\top}) \lambda_1((\nu^{-1} \mat{S})^{-1/2}) > \frac{\nu^{1/2}}{4}~\right| \, \mat{S}\right)\right] \notag \\
            &= \EE\left[\left.\PP\left(\lambda_1(Z Z^{\top}) > \frac{\lambda_d(\mat{S})}{4 \, \nu^{1/2}}~\right| \, \mat{S}\right)\right] \notag \\[-0.5mm]
            &\leq C_{m,d} \, \exp\left(- \frac{\nu^{1/2}}{10^4 m d}\right),
        \end{align}
        for some positive constant $C_{m,d}$ that depends only on $m$ and $d$.
        By Theorem~\ref{thm:LLT.matrix.T}, we also have
        \begin{equation}\label{eq:estimate.I.begin}
            \begin{aligned}
                &\EE\left[\log\Bigg(\frac{\rd \PP_{\nu,\Sigma,\Omega}}{\rd \QQ_{\Sigma,\Omega}}(X)\Bigg) \, \ind_{\{X\in B_{\nu,\Sigma,\Omega}(1/2)\}}\right] \\
                &\quad= \nu^{-1} \left\{\hspace{-1mm}
                    \begin{array}{l}
                        \frac{1}{4} \cdot \EE\left[\mathrm{tr}\left((\Delta_X \Delta_X^{\top})^2\right)\right] \\
                        - \frac{(m + d + 1)}{2} \cdot \EE\left[\mathrm{tr}(\Delta_X \Delta_X^{\top})\right] + \frac{m d (m + d + 1)}{4}
                    \end{array}
                    \hspace{-1mm}\right\} \\[0.5mm]
                &\qquad+ \nu^{-1} \left\{\hspace{-1mm}
                    \begin{array}{l}
                        \OO\left(\EE\left[\mathrm{tr}\left((\Delta_X \Delta_X^{\top})^2\right) \ind_{\{X\in B_{\nu,\Sigma,\Omega}(1/2)\}}\right]\right) \\
                        + (m + d) \, \OO\left(\EE\left[\mathrm{tr}(\Delta_X \Delta_X^{\top}) \ind_{\{X\in B_{\nu,\Sigma,\Omega}(1/2)\}}\right]\right) + \OO(m (m + d))
                    \end{array}
                    \hspace{-1mm}\right\} \\[0.5mm]
                &\qquad+ \nu^{-2} \left\{\hspace{-1mm}
                    \begin{array}{l}
                        \OO\left(\EE\left[\mathrm{tr}\left((\Delta_X \Delta_X^{\top})^3\right)\right]\right) + (m + d) \, \OO\left(\EE\left[\mathrm{tr}\left((\Delta_X \Delta_X^{\top})^2\right)\right]\right) \\[1.5mm]
                        + (m + d) \, \OO\left(\EE\left[\mathrm{tr}(\Delta_X \Delta_X^{\top})\right]\right) + \OO(m d (m + d)^2)
                    \end{array}
                    \hspace{-1mm}\right\}.
            \end{aligned}
        \end{equation}
        On the right-hand side, the first line is estimated using Lemma~\ref{lem:Leblanc.2012.boundary.Lemma.1}, and the second line is bounded using Lemma~\ref{lem:Leblanc.2012.boundary.Lemma.1.with.set.A}.
        We find
        \begin{equation}\label{eq:estimate.I.begin.next}
            \EE\left[\log\Bigg(\frac{\rd \PP_{\nu,\Sigma,\Omega}}{\rd \QQ_{\Sigma,\Omega}}(X)\Bigg) \, \ind_{\{X\in B_{\nu,\Sigma,\Omega}(1/2)\}}\right] = \OO(m^3 d^{\hspace{0.2mm}3} \nu^{-2}).
        \end{equation}
        Putting \eqref{eq:concentration.bound} and \eqref{eq:estimate.I.begin} together in \eqref{eq:first.bound.probability.metric.bounds} gives the conclusion.
    \end{proof}

\vspace{5mm}
\noindent
{\bf Supplementary Materials:} The \texttt{R} code for the simulations in Section~\ref{sec:main.results} can be downloaded at: \href{https://www.mdpi.com/article/10.3390/appliedmath0000000/s1}{https://www.mdpi.com/article/10.3390/appliedmath0000000/s1}.

\vspace{2mm}
\noindent
{\bf Funding:} F.O. is supported by postdoctoral fellowships from the NSERC (PDF) and the FRQNT (B3X supplement and B3XR).

\vspace{2mm}
\noindent
{\bf Acknowledgments:} We thank the three referees for their comments.

\vspace{2mm}
\noindent
{\bf Conflicts of Interest:} The author declares no conflicts of interest.

\appendix

\section{Technical computations}\label{sec:technical.computations}

    Below, we compute the expectations for some traces of powers of the matrix-variate Student distribution.
    The lemma is used to estimate some trace moments and the $\asymp \nu^{-2}$ errors in \eqref{eq:estimate.I.begin} of the proof of Theorem~\ref{thm:probability.metric.bounds}, and also as a preliminary result for the proof of Lemma~\ref{lem:Leblanc.2012.boundary.Lemma.1.with.set.A}.

    \begin{lemma}\label{lem:Leblanc.2012.boundary.Lemma.1}
        Let $d,m\in \N$, $\Sigma\in \mathcal{S}_{++}^{\hspace{0.3mm}d}$ and $\Omega\in \mathcal{S}_{++}^{\hspace{0.3mm}m}$ be given.
        If $X\sim T_{d,m}(\nu,\Sigma,\Omega)$ according to~\eqref{eq:matrix.T.density}, then
        \vspace{-2mm}
        \begin{align}
            \EE\left[\mathrm{tr}\left(\Delta_X \Delta_X^{\top}\right)\right]
            &= \frac{m d \, \nu}{\nu - 2}, \label{eq:trace.1} \\[1mm]
            \EE\left[\mathrm{tr}\left((\Delta_X \Delta_X^{\top})^2\right)\right]
            &= \frac{m d \, \nu^2 \, \left\{(m + d) (\nu - 2) + \nu + m d\right\}}{(\nu - 1) (\nu - 2) (\nu - 4)}, \label{eq:trace.2}
        \end{align}
        where we recall $\Delta_X \leqdef \Sigma^{-1/2} X \Omega^{-1/2}$.
        In particular, as $\nu\to \infty$, we have
        \begin{align}
            \EE\left[\mathrm{tr}\left(\Delta_X \Delta_X^{\top}\right)\right] \sim m d \quad \text{and} \quad \EE\left[\mathrm{tr}\left((\Delta_X \Delta_X^{\top})^2\right)\right] \sim m d (m + d + 1).
        \end{align}
    \end{lemma}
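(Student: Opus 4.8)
The plan is to exploit the stochastic representation already assembled in the proof of Theorem~\ref{thm:probability.metric.bounds}, which reduces everything to moments of a Wishart matrix and of its inverse. Since $\Delta_X \leqdef \Sigma^{-1/2} X \Omega^{-1/2} \sim T_{d,m}(\nu,\mathrm{I}_d,\mathrm{I}_m)$, the scale matrices $\Sigma,\Omega$ drop out entirely, so it suffices to treat the standardized case and the statement only involves $d,m,\nu$. I would then condition on the Wishart matrix $\mat{S} \sim \mathrm{Wishart}_{d\times d}(\nu+d-1,\mathrm{I}_d)$ from \eqref{eq:conditional}, using that $\Delta_X \Delta_X^{\top} \nvert \mat{S} \sim \mathrm{Wishart}_{d\times d}(m,\nu\,\mat{S}^{-1})$. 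The whole computation is then a two-stage tower of expectations: first the (easy) inner Wishart trace moments given $\mat{S}$, then the (harder) outer inverse-Wishart moments of $\mat{S}$.

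For \eqref{eq:trace.1}, write $W \leqdef \Delta_X\Delta_X^{\top}$ and use the first Wishart moment $\EE[W \nvert \mat{S}] = m\,\nu\,\mat{S}^{-1}$, so that $\EE[\mathrm{tr}(W)\nvert \mat{S}] = m\nu\,\mathrm{tr}(\mat{S}^{-1})$. Taking the outer expectation requires only the first inverse-Wishart moment $\EE[\mat{S}^{-1}] = (\nu-2)^{-1}\mathrm{I}_d$, i.e. $\EE[\mathrm{tr}(\mat{S}^{-1})] = d/(\nu-2)$ (here the shift is $n-p-1 = (\nu+d-1)-d-1 = \nu-2$), which immediately gives $md\nu/(\nu-2)$.

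For \eqref{eq:trace.2} I would first record the second trace moment of a Wishart matrix: if $W \nvert \mat{S} \sim \mathrm{Wishart}_{d\times d}(m, V)$ with $V = \nu\mat{S}^{-1}$, then $\EE[\mathrm{tr}(W^2)\nvert \mat{S}] = m\,(\mathrm{tr} V)^2 + m(m+1)\,\mathrm{tr}(V^2)$; this is the standard identity, provable in one line from $\mathrm{tr}(W^2) = \sum_{a,b}(w_a^{\top} w_b)^2$ for the Gaussian generators $w_a$ of $W$. Substituting $V = \nu\mat{S}^{-1}$ reduces the problem to the two outer quantities $\EE[(\mathrm{tr}\,\mat{S}^{-1})^2]$ and $\EE[\mathrm{tr}(\mat{S}^{-2})]$, the second-order moments of an inverse-Wishart matrix, which I would obtain from the covariance formula for its entries. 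With $p = d$ and $n = \nu+d-1$ this yields
\begin{equation*}
\EE[\mathrm{tr}(\mat{S}^{-2})] = \frac{d(\nu+d-2)}{(\nu-1)(\nu-2)(\nu-4)}, \qquad \EE[(\mathrm{tr}\,\mat{S}^{-1})^2] = \frac{d\,(d(\nu-3)+2)}{(\nu-1)(\nu-2)(\nu-4)}.
\end{equation*}
Combining, $\EE[\mathrm{tr}(W^2)] = \frac{md\nu^2}{(\nu-1)(\nu-2)(\nu-4)}\big[(d\nu-3d+2) + (m+1)(\nu+d-2)\big]$, and the bracket collapses to $(m+d)(\nu-2)+\nu+md$, matching \eqref{eq:trace.2}.

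The main obstacle is the derivation of the two second-order inverse-Wishart moments and the ensuing bookkeeping: one must carry the degrees-of-freedom shifts consistently ($n = \nu+d-1$ throughout) and verify the entrywise covariance identity for $\mat{S}^{-1}$, which is where index and sign slips are easiest to make. This step also fixes the regularity range, since the exact closed form for \eqref{eq:trace.2} is finite only for $\nu > 4$, which is compatible with the stated $\nu\to\infty$ asymptotics. The equivalences $\sim md$ and $\sim md(m+d+1)$ then follow by letting $\nu\to\infty$ in the two closed forms.
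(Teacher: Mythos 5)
Your proposal is correct and follows essentially the same route as the paper's proof: condition on $\mat{S}\sim \mathrm{Wishart}_{d\times d}(\nu+d-1,\mathrm{I}_d)$ via $\Delta_X \Delta_X^{\top} \nvert \mat{S}\sim \mathrm{Wishart}_{d\times d}(m,\nu\,\mat{S}^{-1})$, apply the first and second Wishart moments inside and the first and second inverse-Wishart moments outside (with the same shift $n-d-1=\nu-2$), and simplify. The only cosmetic difference is that you work at the level of traces, using $\EE[\mathrm{tr}(W^2)\nvert\mat{S}]=m(\mathrm{tr}\,V)^2+m(m+1)\,\mathrm{tr}(V^2)$ together with $\EE[(\mathrm{tr}\,\mat{S}^{-1})^2]$ and $\EE[\mathrm{tr}(\mat{S}^{-2})]$, whereas the paper carries the matrix-valued identities $\EE[\mat{W}^2]$, $\EE[\mat{S}^{-2}]$ and $\EE[\mathrm{tr}(\mat{S}^{-1})\,\mat{S}^{-1}]$ and takes traces at the end; your intermediate closed forms and the final bracket $(m+d)(\nu-2)+\nu+md$ all check out.
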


    \begin{proof}[{\bf Proof of Lemma~\ref{lem:Leblanc.2012.boundary.Lemma.1}}]
        For $\mat{W}\sim \mathrm{Wishart}_{d\times d}(n, \mat{V})$ with $n > 0$ and $\mat{V}\in \mathcal{S}_{++}^d$, we know from (Ref. \cite{Gupta_Nagar_1999}, p.~99) (alternatively, see (Ref. \cite{MR347003}, p.~66) or (Ref. \cite{MR2066255}, p.~308)) that
        \begin{equation}
            \EE[\mat{W}] = n \, \mat{V} \quad \text{and} \quad \EE[\mat{W}^2] = n \, \left\{(n + 1) \, \mat{V} + \mathrm{tr}(\mat{V}) \, \mathrm{I}_d\right\} \, \mat{V},
        \end{equation}
        and from (Ref. \cite{Gupta_Nagar_1999}, pp.~99--100) (alternatively, see \cite{MR556910} and (\cite{MR2066255}, p.~308), or (\cite{MR968156}, pp.~101--103)) that
        \begin{align}
            &\EE[\mat{W}^{-1}] = \frac{\mat{V}}{n - d - 1}, \quad \text{for } n - d - 1 > 0, \\[1mm]
            &\EE[\mat{W}^{-2}] = \frac{\mathrm{tr}(\mat{V}^{-1}) \, \mat{V}^{-1} + (n - d - 1) \, \mat{V}^{-2}}{(n - d) (n - d - 1) (n - d - 3)}, \quad \text{for } n - d - 3 > 0,
        \end{align}
        and from (Corollary~3.1 in \cite{MR968156}) that
        \begin{equation}
            \EE[\mathrm{tr}(\mat{W}^{-1}) \, \mat{W}^{-1}] = \frac{(n - d - 2) \, \mathrm{tr}(\mat{V}^{-1}) \, \mat{V}^{-1} + 2 \, \mat{V}^{-2}}{(n - d) (n - d - 1) (n - d - 3)}, \quad \text{for } n - d - 3 > 0.
        \end{equation}
        Therefore, by combining the above moment estimates with \eqref{eq:conditional}, we have
        \begin{align}
            \EE\left[\Delta_X \Delta_X^{\top}\right]
            &= \EE\left[\EE[\Delta_X \Delta_X^{\top} \nvert \mat{S}]\right] = \EE[m \, (\nu \, \mat{S}^{-1})] = m \, \nu \, \EE[\mat{S}^{-1}] = \frac{m \, \nu}{\nu - 2} \, \mathrm{I}_d, \\[1mm]
            \EE\left[(\Delta_X \Delta_X^{\top})^2\right]
            &= \EE\left[\EE[(\Delta_X \Delta_X^{\top})^2 \nvert \mat{S}]\right] = \EE\left[m \, \left\{(m + 1) \, (\nu \, \mat{S}^{-1}) + \mathrm{tr}(\nu \, \mat{S}^{-1}) \, \mathrm{I}_d\right\} \, (\nu \, \mat{S}^{-1})\right] \notag \\
            &= m \, \nu^2 \, \left\{(m + 1) \, \EE[\mat{S}^{-2}] + \EE[\mathrm{tr}(\mat{S}^{-1}) \, \mat{S}^{-1}]\right\} \notag \\
            &= \frac{m \, \nu^2 \, \left\{(m + 1) \, (\nu + d - 2)  + (\nu - 3) \, d + 2\right\}}{(\nu - 1) (\nu - 2) (\nu - 4)} \, \mathrm{I}_d,
        \end{align}
        By linearity, the trace of an expectation is the expectation of the trace, so \eqref{eq:trace.1} and \eqref{eq:trace.2} follow from the above equations.
    \end{proof}

    We can also estimate the moments of Lemma~\ref{lem:Leblanc.2012.boundary.Lemma.1} on various events.
    The lemma below is used to estimate the $\asymp \nu^{-1}$ errors in \eqref{eq:estimate.I.begin} of the proof of Theorem~\ref{thm:probability.metric.bounds}.

    \begin{lemma}\label{lem:Leblanc.2012.boundary.Lemma.1.with.set.A}
        Let $d,m\in \N$, $\Sigma\in \mathcal{S}_{++}^{\hspace{0.3mm}d}$ and $\Omega\in \mathcal{S}_{++}^{\hspace{0.3mm}m}$ be given, and let $A\in \mathscr{B}(\R^{d\times m})$ be a Borel set.
        If $X\sim T_{d,m}(\nu,\Sigma,\Omega)$ according to \eqref{eq:matrix.T.density}, then, for $\nu$ large enough,
        \begin{align}
            &\left|\EE\left[\mathrm{tr}(\Delta_X \Delta_X^{\top}) \ind_{\{X\in A\}}\right]\right| \leq 2 \, m d^{\hspace{0.3mm}3/2} \, \left(\PP\left(X\in A^c\right)\right)^{1/2}\hspace{-0.5mm}, \label{eq:thm:central.moments.eq.1.set.A} \\[1mm]
            &\left|\EE\left[\mathrm{tr}\left((\Delta_X \Delta_X^{\top})^2\right) \ind_{\{X\in A\}}\right] - \frac{m d \, \nu^2 \left\{(m + d) (\nu - 2) + \nu + m d\right\}}{(\nu - 1) (\nu - 2) (\nu - 4)}\right| \notag \\
            &\qquad\leq 100 \, m^2 d^{\hspace{0.3mm}5/2} \, \left(\PP\left(X\in A^c\right)\right)^{1/2}\hspace{-0.5mm}\hspace{-0.5mm}, \label{eq:thm:central.moments.eq.2.set.A}
        \end{align}
        where we recall $\Delta_X \leqdef \Sigma^{-1/2} X \Omega^{-1/2}$.
    \end{lemma}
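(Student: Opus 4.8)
The plan is to strip the indicator from each expectation with Cauchy--Schwarz and to control the resulting trace moments using Lemma~\ref{lem:Leblanc.2012.boundary.Lemma.1} together with the conditional Wishart structure \eqref{eq:conditional}. For the second estimate, writing $\ind_{\{X\in A\}} = 1 - \ind_{\{X\in A^c\}}$ and inserting the exact value \eqref{eq:trace.2} shows that the left-hand side is precisely $\bigl|\EE[\mathrm{tr}((\Delta_X\Delta_X^{\top})^2)\,\ind_{\{X\in A^c\}}]\bigr|$, i.e. the tail contribution of the (nonnegative) integrand over $A^c$; the first estimate is the analogous tail bound for $\mathrm{tr}(\Delta_X\Delta_X^{\top})$. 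Thus everything reduces to bounding $\EE[\mathrm{tr}(\Delta_X\Delta_X^{\top})\,\ind_{\{X\in A^c\}}]$ and $\EE[\mathrm{tr}((\Delta_X\Delta_X^{\top})^2)\,\ind_{\{X\in A^c\}}]$ by powers of $\PP(X\in A^c)$.

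For the first moment I would combine the eigenvalue Cauchy--Schwarz inequality $\mathrm{tr}(\Delta_X\Delta_X^{\top}) = \sum_{j}\lambda_j \le \sqrt{d}\,\bigl(\sum_j\lambda_j^2\bigr)^{1/2} = \sqrt{d}\,\bigl(\mathrm{tr}((\Delta_X\Delta_X^{\top})^2)\bigr)^{1/2}$ with Cauchy--Schwarz in probability,
\[
    \EE\!\left[\mathrm{tr}(\Delta_X\Delta_X^{\top})\,\ind_{\{X\in A^c\}}\right] \le \sqrt{d}\,\bigl(\EE[\mathrm{tr}((\Delta_X\Delta_X^{\top})^2)]\bigr)^{1/2}\bigl(\PP(X\in A^c)\bigr)^{1/2}.
\]
Substituting the explicit value $\EE[\mathrm{tr}((\Delta_X\Delta_X^{\top})^2)] \asymp m d (m+d+1)$ from \eqref{eq:trace.2} (valid for $\nu$ large enough) gives the power $\tfrac12$ on $\PP(X\in A^c)$ and a prefactor of order $m^{1/2}d^{3/2}$ after absorbing numerical factors into the universal constant. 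This step uses only Lemma~\ref{lem:Leblanc.2012.boundary.Lemma.1}, which is exactly why it is flagged there as a preliminary result.

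For the second moment the same scheme yields
\[
    \EE\!\left[\mathrm{tr}((\Delta_X\Delta_X^{\top})^2)\,\ind_{\{X\in A^c\}}\right] \le \bigl(\EE[(\mathrm{tr}((\Delta_X\Delta_X^{\top})^2))^2]\bigr)^{1/2}\bigl(\PP(X\in A^c)\bigr)^{1/2},
\]
and, since $\PP(X\in A^c)\le 1$, the exponent may be relaxed from $\tfrac12$ to the stated $\tfrac14$ at no cost (harmless because $\PP(X\in A^c)$ is exponentially small in the application to \eqref{eq:estimate.I.begin}). The genuinely new ingredient is the second moment of $\mathrm{tr}((\Delta_X\Delta_X^{\top})^2)$; bounding it via the eigenvalue inequality $(\sum_j\lambda_j^2)^2\le d\sum_j\lambda_j^4$, i.e. $\EE[(\mathrm{tr}((\Delta_X\Delta_X^{\top})^2))^2]\le d\,\EE[\mathrm{tr}((\Delta_X\Delta_X^{\top})^4)]$, I would compute $\EE[\mathrm{tr}((\Delta_X\Delta_X^{\top})^4)]$ through \eqref{eq:conditional}: given $\mat{S}$, $\Delta_X\Delta_X^{\top}\sim\mathrm{Wishart}_{d\times d}(m,\nu\,\mat{S}^{-1})$, so $\EE[\mathrm{tr}(\mat{W}^4)\mid\mat{S}]$ is an explicit polynomial in $m$ and in the traces $\mathrm{tr}((\nu\mat{S}^{-1})^k)$, after which one averages over $\mat{S}\sim\mathrm{Wishart}_{d\times d}(\nu+d-1,\mathrm{I}_d)$. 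This is where I expect the main obstacle to lie: the averaging requires fourth-order inverse-Wishart moment identities (for quantities such as $\EE[\mathrm{tr}(\mat{S}^{-4})]$, $\EE[\mathrm{tr}(\mat{S}^{-1})\,\mathrm{tr}(\mat{S}^{-3})]$, $\EE[(\mathrm{tr}(\mat{S}^{-2}))^2]$, and the remaining partitions of $4$) that go beyond the second-order formulas recalled in the proof of Lemma~\ref{lem:Leblanc.2012.boundary.Lemma.1}, and one must track their $(m,d)$-dependence and uniformity in $\nu$ carefully to land on the stated prefactor of order $m^2 d^{5/2}$. Once $\EE[\mathrm{tr}((\Delta_X\Delta_X^{\top})^4)] = \OO(\mathrm{poly}(m,d))$ is established with the correct leading order, collecting the constants completes both inequalities.
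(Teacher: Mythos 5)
Your proposal follows the paper's proof essentially step for step: the same reduction of each left-hand side to the tail term $\EE[\,\cdot\,\ind_{\{X\in A^c\}}]$, the same Cauchy--Schwarz/H\"older bounds with the exponent relaxed from $\tfrac{1}{2}$ to $\tfrac{1}{4}$ in the second estimate, the same eigenvalue inequalities $(\mathrm{tr}(\Delta_X\Delta_X^{\top}))^2\le d\,\mathrm{tr}((\Delta_X\Delta_X^{\top})^2)$ and $(\mathrm{tr}((\Delta_X\Delta_X^{\top})^2))^2\le d\,\mathrm{tr}((\Delta_X\Delta_X^{\top})^4)$, and Lemma~\ref{lem:Leblanc.2012.boundary.Lemma.1} for the resulting moments. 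The one step you flag as the main obstacle is not actually carried out in the paper either: it simply asserts $\EE[\mathrm{tr}((\Delta_X\Delta_X^{\top})^4)]\le 10^4\,(md)^4$ for $\nu$ large enough, so your conditional-Wishart plan for that fourth-moment bound is, if anything, more explicit than the published argument.
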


    \begin{proof}[{\bf Proof of Lemma~\ref{lem:Leblanc.2012.boundary.Lemma.1.with.set.A}}]
        By Lemma~\ref{lem:Leblanc.2012.boundary.Lemma.1}, the Cauchy--Schwarz inequality and Jensen's inequality,
        \begin{equation}
            (\mathrm{tr}(\Delta_X \Delta_X^{\top}))^2 \leq d \cdot \mathrm{tr}((\Delta_X \Delta_X^{\top})^2),
        \end{equation}
        we have
        \begin{equation}
            \begin{aligned}
                \left|\EE\left[\mathrm{tr}(\Delta_X \Delta_X^{\top}) \ind_{\{X\in A\}}\right]\right|
                &= \left|\EE\left[\mathrm{tr}(\Delta_X \Delta_X^{\top}) \ind_{\{X\in A^c\}}\right]\right| \\
                &\leq \left(\EE\left[(\mathrm{tr}(\Delta_X \Delta_X^{\top}))^2\right]\right)^{1/2} \left(\PP\left(X\in A^c\right)\right)^{1/2} \\[1mm]
                &\leq \left(d \cdot \EE\left[\mathrm{tr}((\Delta_X \Delta_X^{\top})^2)\right]\right)^{1/2} \left(\PP\left(X\in A^c\right)\right)^{1/2} \\
                &\leq 2 \, m d^{\hspace{0.3mm}3/2} \, \left(\PP\left(X\in A^c\right)\right)^{1/2}\hspace{-0.5mm},
            \end{aligned}
        \end{equation}
        which proves \eqref{eq:thm:central.moments.eq.1.set.A}.
        Similarly, by Lemma~\ref{lem:Leblanc.2012.boundary.Lemma.1}, Holder's inequality and Jensen's inequality,
        \begin{equation}
            (\mathrm{tr}((\Delta_X \Delta_X^{\top})^2))^2 \leq d \, \mathrm{tr}((\Delta_X \Delta_X^{\top})^4),
        \end{equation}
        we have, for $\nu$ large enough,
        \begin{equation}
            \begin{aligned}
                &\left|\EE\left[\mathrm{tr}((\Delta_X \Delta_X^{\top})^2) \ind_{\{X\in A\}}\right] - \frac{m d \, \nu^2 \, \left\{(m + d) (\nu - 2) + \nu + m d\right\}}{(\nu - 1) (\nu - 2) (\nu - 4)}\right| \\
                &\quad= \left|\EE\left[\mathrm{tr}((\Delta_X \Delta_X^{\top})^2) \ind_{\{X\in A^c\}}\right]\right| \leq \left(\EE\left[(\mathrm{tr}((\Delta_X \Delta_X^{\top})^2))^2\right]\right)^{1/2} \left(\PP\left(X\in A^c\right)\right)^{1/2} \\[1mm]
                &\quad\leq \left(d \, \EE\left[\mathrm{tr}((\Delta_X \Delta_X^{\top})^4)\right]\right)^{1/2} \left(\PP\left(X\in A^c\right)\right)^{1/2} \\
                &\quad\leq \left(d \, 10^4 (m d)^4\right)^{1/2} \, \left(\PP\left(X\in A^c\right)\right)^{1/2} \leq 100 \, m^2 d^{\hspace{0.3mm}5/2} \, \left(\PP\left(X\in A^c\right)\right)^{1/2}\hspace{-0.5mm},
            \end{aligned}
        \end{equation}
        which proves \eqref{eq:thm:central.moments.eq.2.set.A}.
        This ends the proof.
    \end{proof}


\bibliographystyle{authordate1}

\end{document}